\newcommand{\Q}{\mathbb{Q}}
\newcommand{\Z}{\mathbb{Z}}
\DeclareMathOperator{\Gal}{Gal}
\newcommand{\union}{\cup} % binary union
\numberwithin{equation}{section}
\newtheorem*{thm}{Theorem}
\newtheorem{lemma}{Lemma}
\newtheorem{conj}{Conjecture}
\newtheorem{proposition}{Proposition}
\theoremstyle{definition}
\theoremstyle{remark}
\newtheorem{remark}[equation]{Remark}
\newenvironment{psmallmatrix}
  {\left(\begin{smallmatrix}}
  {\end{smallmatrix}\right)}
\definecolor{darkgreen}{rgb}{0,0.5,0}
\DeclareRobustCommand{\SkipTocEntry}[5]{}
\begin{document}

\title{Computing the Cuspidal Subgroup of the Modular Jacobian $J_{H}\left( p \right)$ }

\author{Elvira Lupoian}

\address{Mathematics Institute\\
    University of Warwick\\
    CV4 7AL \\
    United Kingdom}

\email{e.lupoian@warwick.ac.uk}
\date{\today}
\thanks{The author is supported by the EPSRC studentship}
\keywords{Modular Jacobians, Cuspidal Subgroup}
\subjclass[2020]{11Y99,11G10}
 
\begin{abstract}
 For a fixed prime $p$ congruent to $1$ modulo $4$ we may define the modular curve $X_{H}\left( p \right)$ associated to the subgroup of non-zero squares modulo $p$. This curve has four cusps and we consider the subgroup of the Jacobian $J_{H}\left( p \right)$ of $X_{H}\left( p \right)$ generated by these points, which we will call the cuspidal subgroup of $J_{H}\left( p \right)$. This is a finite subgroup by the results of Manin and Drinfeld, and lies inside the $\Q \left( \sqrt{ p} \right)$-rational torsion subgroup. In this paper we compute the cuspidal subgroup for all such curves of genus $g$, $2 \leq g \leq 10$, namely those with $p \in \{ 29, 37, 41, 53, 61, 73 \}$, and compare this with  $J_{H}\left( \Q \left( \sqrt{p} \right) \right)_{\text{tors}}$.
\end{abstract}
\maketitle

\section{Introduction}
For any positive integer $N$, let $J_{0}\left( N \right)$ be the Jacobian variety of $X_{0}\left(N \right)$, the canonical model, defined over $ \mathbb{Q}$, of the modular curve of level $\Gamma_{0}\left( N \right) $.
The famous Mordell-Weil theorem tells us that $J_{0}\left( N \right) \left( \mathbb{Q} \right)$ is a finitely generated group; that is,
\begin{center}
  $ J_{0}\left( N \right) \left( \mathbb{Q} \right) \cong J_{0}\left( N \right) \left( \mathbb{Q} \right)_{\textbf{tor}} \oplus \mathbb{Z}^{r}  $  \end{center}
 for some integer $r \geq 0$, and a finite group $ J_{0}\left( N \right) \left( \mathbb{Q} \right)_{\textbf{tor}}$, known as the rational torsion subgroup of $J_{0}\left( N \right)$. The rank of $J_{0}\left( N \right) $ over $\Q$ can be analysed using the decomposition of $J_{0}\left( N \right)$ as a product of abelian varieties of $GL_{2}$-type; for some details see \cite{Stein}. Due to the additional structure of the modular curve, many things can also be said about the structure of the finite torsion subgroup $ J_{0}\left( N \right) \left( \mathbb{Q} \right)_{\textbf{tor}}$.

 For prime level $p \geq 5$, this group is completely understood due to the work of Mazur \cite{Mazur}.

\begin{thm}{(Mazur)}
For a prime $p \geq 5$, $ J_{0}\left( N \right) \left( \mathbb{Q} \right)_{\textbf{tor}}$ is a cyclic subgroup of order the numerator of $\frac{p-1}{12}$, and it is generated by the linear equivalence class of the difference of $0$ and $\infty$, the two cuspidal points of $X_{0}\left( p \right)$.
\end{thm}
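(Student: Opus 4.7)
The theorem splits naturally into two inclusions: a lower bound showing that the cuspidal class $c = [(0) - (\infty)]$ generates a $\Q$-rational cyclic subgroup $C \subseteq J_{0}(p)(\Q)_{\textbf{tor}}$ of order exactly $n = \mathrm{num}((p-1)/12)$, and an upper bound $J_{0}(p)(\Q)_{\textbf{tor}} \subseteq C$. The first is essentially Manin--Drinfeld together with an explicit computation; the second is the deep content of the theorem and requires the Eisenstein-ideal machinery.

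For the lower bound I would use the modular unit $f(z) = \Delta(pz)/\Delta(z)$ on $X_{0}(p)$. A direct $q$-expansion computation at the two cusps gives $\divv(f) = n \bigl( (0) - (\infty) \bigr)$, which witnesses the relation $n \cdot c = 0$ in $J_{0}(p)$; rationality of $c$ is automatic since both cusps of $X_{0}(p)$ are $\Q$-rational. To verify that $n$ is the \emph{exact} order I would reduce modulo some small prime $\ell \neq p$ and use the Eichler--Shimura relation to bound $|J_{0}(p)(\F_{\ell})|$ via traces of Hecke operators, choosing $\ell$ so that the reduction of $c$ still has order $n$ in $J_{0}(p)(\F_{\ell})$.

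The upper bound is the heart of the matter, and I would follow Mazur's approach via the Eisenstein ideal. Let $\mathbb{T}$ be the Hecke algebra acting on $J_{0}(p)$, and let the Eisenstein ideal $I \subseteq \mathbb{T}$ be generated by $U_{p} - 1$ and by $T_{\ell} - (\ell + 1)$ for primes $\ell \neq p$. The first technical input is the identification $\mathbb{T}/I \cong \Z/n\Z$. The central step is then proving that $I$ annihilates $J_{0}(p)(\Q)_{\textbf{tor}}$, which embeds that group into $J_{0}(p)[I]$; a subsequent analysis of $J_{0}(p)[I]$ as an extension of the cuspidal $\Z/n\Z$ by the Shimura subgroup, which is isomorphic to $\mu_{n}$ as a $\Gal(\Qbar/\Q)$-module, shows that the $\Q$-rational part is exactly $C$ (the $\mu_{n}$-factor contributes no $\Q$-points for $n > 1$).

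The main obstacle is the annihilation step, and especially its $p$-primary component. For a prime $q \mid |J_{0}(p)(\Q)_{\textbf{tor}}|$ with $q \neq p$, good reduction of $J_{0}(p)$ at auxiliary primes lets one detect the Hecke action via Frobenius elements, and the Eisenstein relations cut out $C$. For $q = p$ this strategy breaks down entirely: one must instead work directly with the N\'eron model at $p$ (which has purely toric reduction), using Raynaud's classification of finite flat group schemes of $p$-power order together with the Mazur--Ribet description of the component group, to rule out extra $p$-primary $\Q$-rational torsion. A secondary difficulty is the prime $q = 2$, where the Eisenstein argument degenerates and one typically supplements it with an ad hoc analysis using the Atkin--Lehner involution on the supersingular locus in characteristic $p$.
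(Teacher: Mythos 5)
The paper does not prove this statement: it is quoted as Mazur's theorem and attributed to \cite{Mazur}, so there is no in-paper argument to compare yours against. Judged on its own terms, your outline is a broadly faithful roadmap of Mazur's actual proof (lower bound via a modular unit and the Manin--Drinfeld circle of ideas; upper bound via the Eisenstein ideal $I$, the isomorphism $\mathbb{T}/I \cong \Z/n\Z$, annihilation of rational torsion by $I$ through the Eichler--Shimura relation at good primes, and the analysis of $J_0(p)[I]$ via the cuspidal and Shimura subgroups, with separate treatment of the primes $2$ and $p$). It is of course only a sketch: the annihilation step and the structure theory of $J_0(p)[I]$ occupy a large part of Mazur's monograph and cannot be regarded as established by this outline.

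Two details in the lower bound deserve correction. First, the divisor of $f = \Delta(pz)/\Delta(z)$ is $(p-1)\bigl((\infty)-(0)\bigr)$, not $n\bigl((0)-(\infty)\bigr)$; to witness $n\,c = 0$ with $n = \mathrm{num}\bigl((p-1)/12\bigr)$ one must pass to the $d$-th root of $f$ with $d = \gcd(p-1,12)$ and check (as Ogg did) that this root is a genuine function on $X_0(p)$. The exactness of the order $n$ is then most cleanly obtained from the fact that this root generates the group of modular units modulo constants, rather than from your proposed reduction-mod-$\ell$ count, which only ever yields the bound ``order divides $n$'' unless you can certify that reduction is injective on the cuspidal subgroup and that no smaller power of the unit is principal. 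Second, in the upper bound your remark that the $\mu_n$-factor ``contributes no $\Q$-points for $n>1$'' overlooks $\mu_n(\Q) = \{\pm 1\}$ when $n$ is even; this is precisely one of the places where the $2$-primary analysis you mention must do real work.
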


There is a natural generalisation of this question for non-prime level. For a positive integer $N$, let $C_{N}$ be the subgroup of $J_{0}\left( N \right) $ generated by linear equivalences of differences of cusps of $X_{0}\left(N \right)$, which we call the cuspidal subgroup of $J_{0}\left( N \right)$. We denote by $C_{N} \left( \mathbb{Q} \right)$ the rational cuspidal subgroup of $J_{0}\left( N \right)$; that is elements of $C_{N}$ which are invariant under the action of the absolute Galois group $G_{\bar{\mathbb{Q}}} = \Gal \left( \bar{\mathbb{Q}} / \mathbb{Q} \right)$. 

Theorems of Manin \cite{Manin} and Drinfeld \cite{Drinfeld} show that the difference of two cusps is torsion and hence
\begin{center}
$C_{N} \left( \mathbb{Q} \right) \subseteq  J_{0}\left( N \right) \left( \mathbb{Q} \right)_{\textbf{tors}} $.
\end{center}
As a generalization of Mazur's results, known as the Ogg Conjecture, prior to \cite{Mazur}, the following statement is conjectured. 

\begin{conj}{(Generalized Ogg Conjecture)}
For any positive integer $N$, we have
\begin{center}
$C_{N} \left( \mathbb{Q} \right) =   J_{0}\left( N \right) \left( \mathbb{Q} \right)_{\textbf{tors}} $ 
\end{center}
\end{conj}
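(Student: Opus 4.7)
The plan is to reduce the conjecture to the non-trivial inclusion $J_{0}(N)(\Q)_{\tors} \subseteq C_{N}(\Q)$, since the reverse inclusion is immediate from Manin--Drinfeld: every element of $C_{N}$ is torsion, and $C_{N}(\Q)$ is by definition the $G_{\Qbar}$-invariant part, which therefore lies in $J_{0}(N)(\Q)_{\tors}$.

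The natural strategy, following Mazur's proof for prime level, has three steps. First, for a prime $\ell \nmid 6N$ of good reduction, the reduction map
\[ J_{0}(N)(\Q)_{\tors} \longrightarrow J_{0}(N)(\F_{\ell}) \]
is injective on the prime-to-$\ell$ part, reducing the torsion bound to a finite computation in characteristic $\ell$. Second, introduce the Hecke algebra $\mathbb{T}$ and the Eisenstein ideal $\calI \subseteq \mathbb{T}$ generated by $T_{q}-q-1$ for $q \nmid N$ together with suitable generators at the bad primes; a $q$-expansion calculation on the corresponding Eisenstein series shows that $\calI$ annihilates $C_{N}$. Third, show that the action of $\mathbb{T}/\calI$ on $J_{0}(N)(\Q)_{\tors}$ is already pinned down by cuspidal classes, i.e.\ that the Eisenstein quotient of $J_{0}(N)$ has no rational torsion beyond $C_{N}(\Q)$.

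The main obstacle is that this last step is in fact the content of the Generalized Ogg Conjecture and remains open in general. It is known in a number of cases — prime level by Mazur, square-free $N$ coprime to $6$ by Ohta, and various individual levels or $\ell$-parts by Ling, Ren, Yoo and others — but no uniform proof is available. The structural difficulties for composite $N$ are well-known: the cusp set carries non-trivial Galois and Atkin--Lehner action, the space of Eisenstein series is higher-dimensional, the Eisenstein ideal need not be locally principal, and old/new contributions obstruct a clean $\ell$-by-$\ell$ control of the torsion. A pragmatic route, and the one closest in spirit to the present paper, is therefore to bypass the general argument and verify the conjecture directly for each $N$ of interest: list a generating set of cuspidal divisors via Stevens's modular-symbol description, determine the order-relations among them by pairing with Eisenstein series, and bound $J_{0}(N)(\Q)_{\tors}$ from above by computing $J_{0}(N)(\F_{\ell})$ for several small good primes $\ell$.
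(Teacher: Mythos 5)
This statement is labelled as a \emph{conjecture} in the paper, and the paper offers no proof of it: it only records the easy inclusion $C_{N}(\Q)\subseteq J_{0}(N)(\Q)_{\textbf{tors}}$ via Manin--Drinfeld and then lists computational evidence (Ligozat, Poulakis, Ozman--Siksek) and partial theoretical results (Ribet--Wake and others) before moving on to the analogous question for $X_{H}(p)$. Your write-up is therefore correct in the only sense available: you establish the trivial inclusion, you lay out the Mazur-style Eisenstein-ideal strategy accurately, and you honestly identify that the remaining step --- controlling the rational torsion of the Eisenstein quotient for composite $N$ --- is precisely the open content of the conjecture. There is no gap to repair here because there is no proof to reconstruct; claiming one would have been the error.

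One point of comparison worth noting: your final ``pragmatic route'' (enumerate cuspidal divisors, find their relations, and bound $J_{0}(N)(\Q)_{\textbf{tors}}$ from above by $\#J_{0}(N)(\F_{\ell})$ for several good primes $\ell$) is essentially the method the paper actually uses, but applied to $C_{H}(p)\subseteq J_{H}(p)$ rather than to $C_{N}\subseteq J_{0}(N)$. The paper works with explicit canonical (or hyperelliptic) models obtained from $q$-expansions of cusp forms via Galbraith's method, locates the four cusps on those models, computes the group they generate by reducing modulo primes of $\Q(\sqrt{p})$ of good reduction (injectivity of reduction on torsion), and matches the resulting lower bound against the upper bound from $\#J_{H}(\F_{\fq})$ over several $\fq$ --- rather than using Stevens's modular-symbol description or pairings with Eisenstein series. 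So your proposed verification scheme is the same in spirit, differing only in the concrete machinery for computing with the cuspidal classes.
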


There is plenty of evidence supporting this conjecture; some of which is computational, including for $N \in \{ 11, 14, ,15, 17, 19, 20, 21, 24, 27, 32, 36, 49\}$ by Ligozat \cite{Ligozat}, for $N = 125 $ by Poulakis \cite{Poulakis} and  for $N \in \{ 34, 38, 44, 45, 51, 52, 54, 56, 64, 82 \} $ by Ozman and Siksek \cite{OS}.

There are also a number of theoretical result supporting this. 
For instance Ribet and Wake \cite{RibetWake} proved that for a square-free $N$ and any prime $p \nmid 6N$, the p-primary parts of $J_{0}\left(N \right) \left( \mathbb{Q} \right)_{\text{tors} }$ and $C_{N} \left( \mathbb{Q} \right)$ coincide. Many other similar results have been proved, see for instance \cite{ren}, \cite{yoo2023} and \cite{yoo2015}.

Naturally, we can ask the same question for other curves. For any modular curve, the equivalence class of the difference of cusps  has finite order on the Jacobian, and hence the cuspidal subgroup is a subgroup of the $K$-rational torsion points of the Jacobian, where $K$ is the field of definition of the cusps, and we may ask whether this inclusion is in fact an equality. A first step usually involves understanding the structure of the cuspidal subgroup; a problem that has been studied by many authors. 
For instance, the cuspidal subgroup of the  non-split Cartan modular curves of prime power level was studied by Carlucci in \cite{carlucci}. There are also examples of subgroups of the cuspidal subgroup being studied. For instance, Yang \cite{yang2007modular} gives a formula for the size of the subgroup of the cuspidal subgroup of $X_{1}\left( N \right)$ generated by the cusps lying above the cusp $\infty \in X_{0}\left( N \right)$, and when the level is prime the size of the entire cuspidal subgroup is given by Takagi \cite{takagi1992cuspidal}. The size of the subgroup of the cuspidal group generated by equivalence classes of divisors fixed by the Galois action, for some modular curves of prime level lying between $X_{0}\left( p \right)$ and $X_{1}\left( p \right)$ has been studied by Chen \cite{chen2011cuspidal}. These questions are often difficult to study as they involve computing the group (or some specific subgroup) of the modular units of the modular curve. An overview of the general, classical method used to study cuspidal subgroups is given by Gekeler \cite{gekeler2011cuspidal}.

In this paper we will study the cuspidal subgroup and its relations to the larger torsion subgroup in which it lies, for some intermediate modular curves. Let $p$ be an odd prime and $H$ a subgroup of the multiplicative group $\left( \mathbb{Z} /p\mathbb{Z} \right)^{*} $. Let $\Gamma_{H}\left( p \right)$ be the congruence subgroup associated to $H$,  $X_{H}\left( p \right) $ the associated modular curve and $J_{H}\left( p \right)$ the Jacobian variety of the curve, see Section 2 for definitions. Let $C_{H}\left( p \right)$ be the subgroup of $J_{H}\left( p \right) $ generated by linear equivalence classes of differences of cusps, which we will call the cuspidal subgroup as before. In this paper, we take $H$ to be subgroup of squares modulo $p$, for a large enough prime $p$, which is additionally congruent to $1$ modulo $4$, and we explicitly investigate the above question. Note that for primes $p   \equiv  3 \ \text{mod} \  4$, the two modular curves $X_{H}\left( p \right)$ and $X_{0}\left( p \right)$ coincide, and hence the above inclusion is an equality by the Mazur's theorem.
The main result proved in Section 4 is the following.

\begin{thm}
Let $p \geq 5$ be a prime such that the genus $g$ of $X_{H}\left( p \right)$ satisfies $1 \leq g \leq 10$. Then 
\begin{center}
$C_{H}\left( p \right)  \left( \mathbb{Q} \right) =   J_{H} \left( p \right) \left( \mathbb{Q} \right)_{\textbf{tors}} $
\end{center}
Furthermore, if additionally $p \ \equiv \ 1 \  mod \ 4 $, that is $p \in \{ 29, 37, 41, 53, 61, 73 \}$,
\begin{center}
$C_{H}\left( p \right) =  \cong J_{H} \left( p \right) \left( \mathbb{Q} \left( \sqrt{p} \right) \right)_{\textbf{tors}}\left( \mathbb{Z} /n \mathbb{Z} \right) \times \left( \mathbb{Z} /m \mathbb{Z} \right) $ \\
$C_{H}\left( p \right)  \left( \mathbb{Q} \right) =   J_{H} \left( p \right) \left( \mathbb{Q} \right)_{\textbf{tors}}\cong  \left( \mathbb{Z} /m \mathbb{Z} \right) $ 
\end{center}
where $n$ is positive integers depending on the level p,
\begin{center}
\begin{tabular}{ | c | c | c | }
\hline 
 p & n & m  \\ 
 \hline 
 29 & 3 & 21 \\  
 37 & 5 & 15 \\
 41 & 8 & 40 \\
 53 & 7 & 91 \\
 61 & 11 & 55  \\
 73 & 22 & 66 \\
 \hline 
\end{tabular}
\end{center}
and $m$ is the lowest common multiple of $n$ and the numerator of $ \ \frac{p-1}{12}$.
\end{thm}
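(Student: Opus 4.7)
My plan is to combine an explicit computation of the cuspidal divisor class group with an upper bound on rational torsion obtained by reduction modulo auxiliary primes. For $p\equiv 3\pmod 4$ the curves $X_H(p)$ and $X_0(p)$ coincide and Mazur's theorem gives the result directly, so the content lies in the case $p\equiv 1\pmod 4$, on which I focus below.

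The first step is to analyse the cusps and the Galois action. The covering $X_H(p)\to X_0(p)$ has degree $[(\Z/p\Z)^{*}:H]=2$, and since $p\equiv 1\pmod 4$ forces $-1\in H$ each of the two cusps of $X_0(p)$ splits into two cusps of $X_H(p)$; label them $\infty_1,\infty_2$ above $\infty$ and $0_1,0_2$ above $0$. The Galois action on the cusps factors through $(\Z/p\Z)^{*}/H\cong\{\pm 1\}$, which by Kronecker-Weber corresponds to the unique quadratic subfield of $\Q(\zeta_p)$, namely $\Q(\sqrt{p^{*}})$ with $p^{*}=(-1)^{(p-1)/2}p$; for $p\equiv 1\pmod 4$ this is $\Q(\sqrt{p})$, and the nontrivial element swaps $\infty_1\leftrightarrow\infty_2$ and $0_1\leftrightarrow 0_2$.

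The second and central step is to compute $C_H(p)$ by exhibiting explicit modular units on $X_H(p)$. The pullback of the Dedekind eta quotient $\eta(p\tau)^{24}/\eta(\tau)^{24}$ from $X_0(p)$ provides the classical Manin-Drinfeld relation of order $\mathrm{num}((p-1)/12)$. The extra cyclic factor of order $n$ should come from products $\prod g_a$ of Siegel functions, taken over a set of coset representatives chosen so that the product is $\Gamma_H(p)$-invariant but not $\Gamma_0(p)$-invariant, with trivial weight and character. Computing the $q$-expansions at the four cusps---directly at $\infty$ and via the Atkin-Lehner involution $w_p$ at $0$---produces a relation matrix whose Smith normal form yields $C_H(p)\cong(\Z/n\Z)\oplus(\Z/m\Z)$ with $n$ as tabulated and $m=\lcm(n,\mathrm{num}((p-1)/12))$. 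Extracting the Galois-fixed subgroup via the involution from the first step then yields $C_H(p)(\Q)\cong\Z/m\Z$.

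To turn the inclusions $C_H(p)\subseteq J_H(p)(\Q(\sqrt{p}))_{\tors}$ and $C_H(p)(\Q)\subseteq J_H(p)(\Q)_{\tors}$ into equalities, the third step is to obtain matching upper bounds by reduction. For a prime $\ell\neq p$ and a prime $\mathfrak{l}$ of $\Q(\sqrt{p})$ above $\ell$, the prime-to-$\ell$ torsion of $J_H(p)(\Q(\sqrt{p}))$ injects into $J_H(p)(\F_{\mathfrak{l}})$, whose order is computed from the characteristic polynomial of Frobenius on $H^{1}$ or directly from an explicit model; taking gcds over several $\ell$ of both split and inert behaviour in $\Q(\sqrt{p})$ pins down the torsion exactly, and the genus constraint $g\le 10$ is what keeps this feasible. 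The principal obstacle is the second step: realising $\Gamma_H(p)$-invariant modular units with controlled cuspidal divisors demands careful coset-representative choices, checking invariance through the Siegel transformation laws, and tracking $q$-expansions through the involution at $0$. A secondary difficulty is the $p$-primary part of the torsion, since $p$ is of bad reduction for $X_H(p)$: enough auxiliary primes must be used to force the $p$-parts of the torsion and of $C_H(p)$ to agree, possibly with the aid of an Eisenstein-ideal-style argument.
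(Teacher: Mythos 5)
Your overall architecture (explicit determination of $C_{H}(p)$, then an upper bound on torsion by reduction modulo auxiliary primes, then matching the two) is the same as the paper's, and your third step is essentially identical to what the paper does. But your central step is genuinely different, and it is also where the problems lie. The paper does not use modular units at all: it computes explicit canonical models of $X_{H}(p)$ from $q$-expansions (Galbraith's method), locates the four cusps as explicit points on these models, and then determines $C_{H}(p)$ by reducing the cuspidal divisor classes modulo a single prime $\mathfrak{q}$ of good reduction in $\Q(\sqrt{p})$; since reduction is \emph{injective} on torsion for $\mathfrak{q} \nmid 2p$, the subgroup of $J_{H}(\F_{\mathfrak{q}})$ generated by the reduced classes is isomorphic to $C_{H}(p)$ itself, so a finite computation gives the exact group. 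Your Siegel-unit route, as written, only produces \emph{relations}: each unit you construct shows a certain cuspidal divisor class is trivial, so the Smith normal form of your relation matrix gives a group that surjects onto $C_{H}(p)$, i.e.\ an upper bound. To conclude it equals $C_{H}(p)$ you would have to prove that your units generate the full group of modular units of $X_{H}(p)$ modulo constants (the hard part in Takagi- and Chen-style arguments), or supplement with a lower bound -- which in practice means invoking exactly the reduction-injectivity argument the paper uses. This gap in your second step needs to be closed.

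There is also a concrete error in your first step. The Galois action on the cusps of $X_{H}(p)$ does not swap both pairs: it acts through $(\Z/p\Z)^{*}$ as $\begin{psmallmatrix}1&0\\0&\sigma\end{psmallmatrix}$ on representatives $\begin{psmallmatrix}x\\y\end{psmallmatrix}$, so one pair of cusps (those with $y=0$, resp.\ $x=0$ depending on convention) is fixed pointwise and hence rational over $\Q$, while only the other pair is interchanged by the nontrivial element of $\Gal(\Q(\sqrt{p})/\Q)$. This is visible in the paper's explicit models, where $c_{1},c_{2}$ have rational coordinates and only $c_{3},c_{4}$ involve $\sqrt{p}$. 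With your action (all four cusps non-rational, two conjugate pairs) the Galois-fixed subgroup of $C_{H}(p)$ would be computed from the wrong involution and would not in general give $\langle [c_{3}+c_{4}-2c_{1}]\rangle \cong \Z/m\Z$. Finally, your worry about the $p$-primary part of the torsion is unnecessary: for a prime $\mathfrak{l}$ of $\Q(\sqrt{p})$ above an odd $\ell \neq p$, reduction is injective on the \emph{entire} torsion subgroup, including its $p$-part, so no Eisenstein-ideal argument is needed.
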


All the \texttt{MAGMA} code used to compute to cuspdial subgroups and torsion subgroups of the Jacobians of the curves stated above can be found in the \texttt{GitHub} repository:
\begin{center}
\href{ https://github.com/ElviraLupoian/XHSquares}{https://github.com/ElviraLupoian/XHSquares}
\end{center}

The main motivation for studying the modular curves $X_{H}\left( p \right)$ is their moduli interpretation. As with the classical modular curves, non-cuspidal points of the $X_{H}\left( p \right)$ parameterise elliptic curves with certain $p$-torsion information. In particular, non-cuspidal points of $X_{H}\left( p \right)$ represent elliptic curves $E$ whose image of the  mod $p$ Galois representation is of the form 
\begin{center}
    $\bar{\rho}_{E,p} \sim \begin{psmallmatrix}
        * \in H & * \\ 0 & * 
    \end{psmallmatrix}.$
\end{center}
For a summary of this see \cite{siksek2019explicit}.
We might be able to understand the rational points of $X_{H}\left( p \right)$ by embedding them into the rational points of $J_{H}\left( p \right)$, and hence understanding $J_{H}\left( p \right)\left( \Q \right)_{\text{tors}}$, or part of this, is an important part of understanding $J_{H}\left( p \right) \left( \Q \right)$ and $X_{H}\left( p \right)\left( \Q \right)$. 
\section{Preliminaries}
In this section we give an overview of some basic facts and constructions related to modular curves. For a more comprehensive overview the reader may refer to \cite[Chapter 1]{shbook} or \cite[Chapter 1]{stevensarith}.

Fix a congruence subgroup $\Gamma$ of $SL_{2}\left( \Z \right)$.  This group acts on the upper half planes $\mathbb{H} = \{ z \in \mathbb{C} \ |  \ \text{Im} \left( z \right) > 0 \}$ via fractional linear transformations,
\begin{center}
  $  \begin{psmallmatrix}
        a & b \\ c & d
    \end{psmallmatrix} \cdot \tau = \frac{a\tau + b }{c \tau + d } \  \text{for any } \ \begin{psmallmatrix}
        a & b \\ c & d
    \end{psmallmatrix}  \in \Gamma, \tau\in \mathbb{H}$
\end{center}
and the resulting quotient  $Y\left( \Gamma \right) =  \Gamma / \mathbb{H}$ is a complex Riemann surface, which can be compactified by adding finitely many points. We denote the compact surface by $X\left( \Gamma \right)$, and refer to the points of  $X\left( \Gamma \right) \setminus Y \left( \Gamma \right)$ as the cusps of $X\left( \Gamma \right)$. Note that $X\left( \Gamma \right)$ is a compact Riemann surface, and hence an algebraic curve, and the cusps are in fact orbits of $\mathbb{P}^{1} \left( \Q \right) = \Q \union \{ \infty \}$ under the natural extension of the action of $\Gamma$ to this set. 

Let $N$ be any positive integer. We recall the definitions of some standard congruence subgroups of $SL_{2}\left( \Z \right)$;
\begin{align*}
& \Gamma \left( N \right) = \{ \begin{psmallmatrix}
     a & b \\ c & d 
\end{psmallmatrix} \in SL_{2} \left( \Z \right) \ : \ a,d \  \equiv \  1 \  \text{mod} \ N   \ \text{and} \   b,c  \ \equiv  \ 0 \ \text{mod} \ N \}, \\
& \Gamma_{1} \left( N \right) = \{ \begin{psmallmatrix}
     a & b \\ c & d 
\end{psmallmatrix} \in SL_{2} \left( \Z \right) \ : \ a,d \  \equiv \  1 \  \text{mod} \ N   \ \text{and} \   c  \  \equiv \ 0 \ \text{mod} \ N \}, \\
& \Gamma_{0} \left( N \right) = \{ \begin{psmallmatrix}
     a & b \\ c & d \end{psmallmatrix} \in SL_{2} \left( \Z \right) \ : \ c  \  \equiv \ 0  \ \text{mod} \ N \}.
\end{align*}
The associated modular curves $X\left( \Gamma \left( N \right) \right)$,  $X\left( \Gamma_{1} \left( N \right) \right)$ and $X\left( \Gamma_{0} \left( N \right) \right)$ are denoted by $X\left( N \right)$, $X_{1}\left( N \right)$ and $X_{0} \left( N \right)$ respectively. It can be shown that all of these curves have models over $\Q$.

Let $H$ be any subgroup of the multiplicative group $\left( \mathbb{Z} / N \mathbb{Z} \right)^{*}$. Corresponding to $H$, one can define the following congruence subgroup
\begin{center}
    $\Gamma_{H}\left( N \right) = \{\begin{psmallmatrix} a & b \\c  & d \end{psmallmatrix} \in SL_{2}\left( \Z \right) \ : c \ \equiv \ 0 \ \text{mod} \ N, \ \ \  a,d \ \text{mod} \ N \in H  \}$.
\end{center}
We will write $X_{H}\left( N \right)$ for the modular curve associated to $\Gamma_{H}\left( N \right)$. The $X_{H}\left( N \right)$ has a model over a subfield of  $\Q \left( \zeta_{N} \right)$ depending on $H$.

\begin{remark}
As $-I$ acts the same as $I$ on $\mathbb{H}$, $X_{\langle \pm 1, H \rangle} \left( N \right) = X_{H}\left( N \right)$, so we always assume $-1 \in H $.
\end{remark}

The classical curves $X_{1}\left( N \right)$ and $X_{0}\left( N \right)$ can be recovered by taking $H = \{ \pm 1 \}$ and $H = \left( \Z / N \Z \right)^{*}$ respectively. 

The above definitions show that for any $N$ and $H$, we have the following inclusions:
\begin{center}
    $ \Gamma\left( N \right) \subseteq \pm \Gamma_{1}\left( N \right) \subseteq \Gamma_{H}\left( N \right) \subseteq  \Gamma_{0}\left( N \right) $
\end{center}
and these inclusions induce natural Galois coverings:
\begin{center}
  $X\left( N \right) \rightarrow X_{1}\left(N \right) \rightarrow X_{H}\left( N \right) \rightarrow X_{0}\left( N \right)$
\end{center}
where the above maps are simply quotient maps.
We may use the above to compute the genus of $X_{H}\left( N \right)$.

For any divisor $d$ of $N$, let $\pi_{d}$ be the projection 
\begin{center}
   $ \left( \Z / N \Z \right)^{*} \longrightarrow \left( \Z / \text{lcm}\left( d, N/d \right) \Z \right)^{*} $.
\end{center}

The following theorem is proved in \cite{JQdim}.
\begin{thm}
 The genus of $X_{H}\left(N \right)$ is 
 \begin{center}
 $g_{H}\left(N \right) = 1 + \frac{\mu \left( N, H \right)}{12} - \frac{\nu_{2} \left( N, H \right)}{4} - \frac{\nu_{3} \left( N, H \right)}{3} - \frac{\nu_{\infty} \left( N , H \right)}{2} $    
 \end{center}
 where 
 \begin{align*}
 & \mu \left(N , H \right) = N \displaystyle \prod_{p \mid N } \left( 1 + 1/p \right) \varphi \left( N \right) / \mid H \mid; \\
 & \nu_{2} \left( N , H \right) = \mid \{ b \ \text{mod} \ N \in H \ \mid \ b^{2} + 1 \ \equiv \ 0 \ \text{mod} \ N \} \mid\varphi \left( N \right) / \mid H \mid ;\\
 & \nu_{3} \left( N , H \right) = \mid \{ b \ \text{mod} \ N \in H \ \mid \ b^{2} - b  + 1 \ \equiv \ 0 \ \text{mod} \ N \} \mid\varphi \left( N \right) / \mid H \mid; \\
 & \nu_{\infty} \left( N , H \right) = \displaystyle \sum_{d \mid N} \frac{\varphi\left( d \right)\varphi \left( N/d \right)} {\pi_{d} \left( H \right)};
 \end{align*}
 and $\varphi$ denotes the usual Euler totient function.
\end{thm}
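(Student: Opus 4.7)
The plan is to apply the Riemann--Hurwitz formula to the natural Galois covering $\pi\colon X_H(N) \to X(1)$ induced by the inclusion $\Gamma_H(N) \subseteq SL_2(\Z)$. Since $X(1)$ has genus $0$ and $\pi$ is branched only above $j = 0, 1728, \infty$, Riemann--Hurwitz reduces to
\[
2g_H(N) - 2 \;=\; -2\deg\pi + \sum_{P}(e_P - 1),
\]
where the sum runs over points of $X_H(N)$ above these three special values.

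First I would compute $\deg\pi = [\overline{SL_2(\Z)} : \overline{\Gamma_H(N)}]$. Since $-I \in \Gamma_H(N)$ (as $-1 \in H$), this equals $[SL_2(\Z):\Gamma_H(N)]$, which factors through the tower $\Gamma_H(N) \subseteq \Gamma_0(N) \subseteq SL_2(\Z)$. The classical index $[SL_2(\Z):\Gamma_0(N)] = N\prod_{p\mid N}(1 + 1/p)$, combined with the isomorphism $\Gamma_0(N)/\Gamma_H(N) \cong (\Z/N\Z)^{\ast}/H$ via the lower-right entry, gives $\deg\pi = \mu(N,H)$. Next I would analyse the ramification. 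Above $j=1728$, whose stabilizer in $PSL_2(\Z)$ has order $2$, every preimage has ramification index $1$ or $2$; the unramified preimages are exactly the $\nu_2(N,H)$ elliptic points of order $2$ of $X_H(N)$, so the remaining $(\mu - \nu_2)/2$ preimages contribute $(\mu - \nu_2)/2$ to the ramification sum. The analogous analysis above $j=0$ yields $2(\mu - \nu_3)/3$, and above $j=\infty$ every preimage is a cusp and the contribution is $\mu - \nu_\infty$. Substituting into Riemann--Hurwitz and simplifying produces the displayed genus formula.

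It remains to derive the combinatorial counts $\nu_2, \nu_3, \nu_\infty$. Elliptic points of $\overline{\Gamma_H(N)}\backslash\bbH$ correspond to $\overline{\Gamma_H(N)}$-conjugacy classes of torsion elements in $\overline{\Gamma_H(N)}$; every order-$2$ (resp.\ order-$3$) element of $PSL_2(\Z)$ is $SL_2(\Z)$-conjugate to $S = \begin{psmallmatrix} 0 & -1 \\ 1 & 0\end{psmallmatrix}$ (resp.\ $ST = \begin{psmallmatrix} 0 & -1 \\ 1 & 1\end{psmallmatrix}$). An orbit--stabilizer computation shows that the $SL_2(\Z)$-conjugates of $S$ (resp.\ $ST$) that lie in $\Gamma_H(N)$ are, after reduction mod $N$, parametrised by their lower-right entry $b \in H$ satisfying $b^2 + 1 \equiv 0$ (resp.\ $b^2 - b + 1 \equiv 0$) mod $N$, each appearing with multiplicity $\varphi(N)/|H|$ coming from the enlargement of $\Gamma(N)$-orbits to $\Gamma_H(N)$-orbits; this yields the stated formulas. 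For $\nu_\infty$, the cusps are the $\overline{\Gamma_H(N)}$-orbits on $\PP^1(\Q)$. Stratifying these by $d = \gcd(N, c)$ for a reduced representative $(a{:}c)$, and identifying the cusps in each stratum with $\pi_d(H)$-orbits on $(\Z/\mathrm{lcm}(d,N/d)\Z)^{\ast}$, produces the sum $\sum_{d\mid N} \varphi(d)\varphi(N/d)/|\pi_d(H)|$.

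The main obstacle is the last step's orbit bookkeeping. Verifying the multiplicity $\varphi(N)/|H|$ in the elliptic-point counts requires careful tracking of how $SL_2(\Z)$-conjugates of $S$ and $ST$ get partitioned into $\Gamma_H(N)$-conjugacy classes once one descends through $\Gamma(N)$. The cusp count is most delicate when $\gcd(d, N/d) > 1$, where the stabiliser of a cusp in the double-coset decomposition $\overline{\Gamma_H(N)}\backslash SL_2(\Z)/\Gamma_\infty$ must be computed exactly to avoid over- or under-counting; this is precisely the subtlety addressed by passing through the projection $\pi_d$ in the formula.
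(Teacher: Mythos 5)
First, a point of orientation: the paper does not prove this theorem at all --- it is quoted from \cite{JQdim} --- so there is no internal proof to compare against, and your argument has to stand on its own. Its Riemann--Hurwitz skeleton does stand. Since the paper's standing assumption $-1 \in H$ puts $-I$ in $\Gamma_{H}(N)$, the degree of $X_{H}(N) \to X(1)$ is $[SL_{2}(\Z):\Gamma_{0}(N)]\,[\Gamma_{0}(N):\Gamma_{H}(N)] = N\prod_{p \mid N}(1+1/p)\,\varphi(N)/|H| = \mu(N,H)$, your ramification contributions $(\mu-\nu_{2})/2$, $2(\mu-\nu_{3})/3$ and $\mu - \nu_{\infty}$ over $j = 1728, 0, \infty$ are correct, and they recombine to the displayed formula. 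Your elliptic-point count is only sketched, and the mechanism you invoke (``enlargement of $\Gamma(N)$-orbits'') is not the right one; the clean argument runs through the abelian covering $X_{H}(N) \to X_{0}(N)$ with Galois group $(\Z/N\Z)^{*}/H$: an elliptic point of $X_{0}(N)$ of order $2$ (resp.\ $3$) with parameter $b$ (the diagonal entry mod $N$ of a stabiliser generator, a conjugation invariant) has trivial inertia in this covering precisely when $b \in H$, in which case it splits into $\varphi(N)/|H|$ elliptic points of $X_{H}(N)$, and otherwise it is ramified with no elliptic points above it. That yields exactly the stated $\nu_{2}$ and $\nu_{3}$.

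The genuine gap is your cusp count: the identification you assert is false as stated, not merely delicate. If the cusps of denominator $d$ were the $\pi_{d}(H)$-orbits on $(\Z/\lcm(d,N/d)\Z)^{*}$, the stratum would contribute $\varphi(\lcm(d,N/d))/|\pi_{d}(H)|$ cusps, whereas the theorem requires $\varphi(d)\varphi(N/d)/|\pi_{d}(H)|$; since $\varphi(d)\varphi(N/d) = \varphi(\gcd(d,N/d))\,\varphi(\lcm(d,N/d))$, you undercount by the factor $\varphi(\gcd(d,N/d))$ whenever $\gcd(d,N/d) \geq 3$. Concretely, for $N = 9$ and $H = (\Z/9\Z)^{*}$ your identification gives one cusp for each of the three divisors, but $X_{0}(9)$ has four cusps. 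The repair is to act on the right set: a cusp of denominator $d$ is represented by a pair $(x \bmod d,\, y \bmod N/d)$ with $x \in (\Z/d\Z)^{*}$, $y \in (\Z/(N/d)\Z)^{*}$ --- a set of size $\varphi(d)\varphi(N/d)$ --- and $n \in H$ acts by $(x,y) \mapsto (nx, n^{-1}y)$. This action factors through $\pi_{d}$ because an element congruent to $1$ modulo both $d$ and $N/d$ is congruent to $1$ modulo $\lcm(d,N/d)$; it is free because the stabiliser of any single pair is again exactly $\ker \pi_{d}$; and the $\pm$-identification of cusp representatives is already absorbed into the action since $-1 \in H$. A free action of $\pi_{d}(H)$ on $\varphi(d)\varphi(N/d)$ elements has $\varphi(d)\varphi(N/d)/|\pi_{d}(H)|$ orbits, and summing over $d \mid N$ gives $\nu_{\infty}(N,H)$. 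With that substitution (and the elliptic-point argument made rigorous as above), your proof goes through.
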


From now on, we assume $N=p$ is prime, $ p \geq 5$ and $H$ is the subgroup of non-zero squares modulo $p$. We additionally assume that $p$ is congruent to $1$ modulo $4$, in order to ensure that $-1 \in H $. Note that such curves have models over $\Q$.

\begin{proposition}
With $p$ and $H$ as above, the genus $g_{H}\left( p \right) $ of $X_{H}\left( p \right)$ is:
\begin{equation*}
  g_{H}\left( p \right) =
    \begin{cases}
      \frac{p-5}{6} & \text{if} \  p \ \equiv \ 5 \ \text{mod} \ 24 \\
       \frac{p - 1 }{6}  -3 & \text{if} \  p \ \equiv \ 1 \ \text{mod} \ 24 \\
     \frac{p -17}{6} + 1 & \text{if} \ p \ \equiv \ 17 \ \text{mod} \ 24 \\
    \frac{p-13}{6} & \text{if} \  p \ \equiv \ 13 \ \text{mod} \ 24 
      \end{cases}       
\end{equation*}
\end{proposition}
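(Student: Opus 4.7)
The plan is to apply the genus formula from Theorem 2 directly, reducing everything to a case analysis on $p \pmod{24}$.

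First I would compute the easy ingredients. Since $H$ is the squares subgroup, $|H|=(p-1)/2$, so $\varphi(p)/|H|=2$. This immediately gives
\[
\mu(p,H)=p(1+1/p)\cdot 2=2(p+1).
\]
For $\nu_\infty$, the only divisors of $p$ are $1$ and $p$, and in both cases $\operatorname{lcm}(d,p/d)=p$, so $\pi_d$ is the identity and $|\pi_d(H)|=(p-1)/2$. Thus
\[
\nu_\infty(p,H)=2\cdot\frac{p-1}{(p-1)/2}=4,
\]
which is consistent with $X_H(p)$ having four cusps.

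Next I would analyse the elliptic point counts $\nu_2$ and $\nu_3$. The equation $b^2+1\equiv 0\pmod p$ has solutions precisely when $p\equiv 1\pmod 4$ (which holds), and the two solutions are the elements of order $4$ in $(\Z/p\Z)^*$. Such an element lies in $H$ iff its order divides $(p-1)/2$, i.e.\ iff $8\mid p-1$. Hence $\nu_2=4$ if $p\equiv 1\pmod 8$ and $\nu_2=0$ if $p\equiv 5\pmod 8$. Similarly $b^2-b+1\equiv 0\pmod p$ picks out the primitive $6$th roots of unity; these exist iff $p\equiv 1\pmod 3$, and (having order $6$) lie in $H$ iff $12\mid p-1$. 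Combined with $p\equiv 1\pmod 4$, one gets $\nu_3=4$ if $p\equiv 1\pmod{12}$ and $\nu_3=0$ if $p\equiv 5\pmod{12}$. The four residue classes $p\in\{1,5,13,17\}\pmod{24}$ then correspond to the four pairs $(\nu_2,\nu_3)\in\{(4,4),(0,0),(0,4),(4,0)\}$ respectively.

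Finally I would substitute into
\[
g_H(p)=1+\frac{2(p+1)}{12}-\frac{\nu_2}{4}-\frac{\nu_3}{3}-\frac{4}{2}=-1+\frac{p+1}{6}-\frac{\nu_2}{4}-\frac{\nu_3}{3}
\]
and simplify each of the four cases, matching the stated piecewise formula (for example, in the case $p\equiv 1\pmod{24}$ one obtains $(p-19)/6=(p-1)/6-3$, and in the case $p\equiv 17\pmod{24}$ one obtains $(p-11)/6=(p-17)/6+1$). The main obstacle is the elliptic-point analysis: getting the quadratic-residue conditions for the order-$4$ and order-$6$ elements correct, and verifying that the resulting conditions on $p\pmod 8$ and $p\pmod{12}$ combine into exactly the four cases modulo $24$. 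Once that dictionary is in hand, the remaining substitutions are purely arithmetic.
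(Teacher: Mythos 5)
Your proposal is correct and takes essentially the same route as the paper: the paper's proof likewise just evaluates $\mu$, $\nu_2$, $\nu_3$, $\nu_\infty$ from the quoted genus formula (obtaining $\nu_2=4$ iff $p\equiv 1 \bmod 8$, $\nu_3=4$ iff $p\equiv 1\bmod 12$, $\nu_\infty=4$) and leaves the substitution as an elementary exercise, which you carry out explicitly and correctly.
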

\begin{proof}
Applying the theorem above in our case, it is an elementary exercise to prove 
\begin{equation*}
    \nu_{2}\left(p, H \right) = \begin{cases}
        4 & \text{if} \ p \equiv 1 \ \text{mod} \ 8 \\
        0 & \text{otherwise}
    \end{cases}
\end{equation*}
\begin{equation*}
    \nu_{3}\left( p , H \right) = \begin{cases}
        4 & \text{if} \ p \ \equiv \ 1 \ \text{mod} \ 12 \\
        2 & \text{if} \ p \ \equiv \ 7 \ \text{mod} \ 12 \\
        0 & \text{otherwise}
    \end{cases}
\end{equation*}
\begin{equation*}
    \nu_{\infty}\left(p, H \right) = \begin{cases}
        4 & \text{if} \ p \equiv 1 \ \text{mod} \ 4 \\
        2 & \text{if} \ p \equiv 3 \ \text{mod} \ 4 \\    \end{cases}
\end{equation*}
\end{proof}

The cusps of all modular curves considered can be described abstractly. We will only consider the case $N=p$ prime and drop the standing assumption that $p$ is congruent to $1$ modulo 4 for our initial description. For arbitrary level $N$, see Ogg's classical description of cusps \cite{Ogg}.

A cusp of $X\left( p \right)$ can be represented by a vector $\pm \begin{psmallmatrix}
    x \\ y 
\end{psmallmatrix}$ with $x,y \in \Z  /p \Z $ and $\text{gcd}\left( x, y , p \right) =1$. 
The curve is defined over $\Q$ and the cusps are rational over $\Q \left( \zeta_{p} \right)$, with the Galois group $\left( \Z/p\Z \right)^{*}$ operating as $\begin{psmallmatrix}
  1 & 0 \\ 0 & \sigma
\end{psmallmatrix}$, where $\sigma \in \left( \Z / p \Z \right)^{*}$ represent the automorphism $\zeta_{p} \mapsto \zeta_{p}^{\sigma}$, hence 
\begin{center}
    $\sigma \begin{psmallmatrix}
        x \\ y 
    \end{psmallmatrix} = \begin{psmallmatrix}
        1 & 0 \\ 0 & \sigma 
    \end{psmallmatrix} \begin{psmallmatrix}
        x \\ y 
    \end{psmallmatrix} = \begin{psmallmatrix}
         x \\ \sigma y
    \end{psmallmatrix}.$
\end{center}

The cusps of $X_{1}\left( p \right)$ are orbit classes under the action of $\begin{psmallmatrix}
    1 & 1 \\ 0 & 1 
\end{psmallmatrix}$, or equivalently, they are equivalence classes with respect to the relation
\begin{center}
    $\pm \begin{psmallmatrix}
        x \\ y 
    \end{psmallmatrix} \sim  \pm \begin{psmallmatrix}
        x + ry \\ y 
    \end{psmallmatrix}$ for $r \in \Z. $
\end{center}
For each class, we may take a normalized representative $\begin{psmallmatrix}
    x \\ y 
\end{psmallmatrix}$ with 
\begin{itemize}
    \item $0 \leq y < p$,
    \item $0 \leq x <\text{gcd}\left( p , y \right)$.
\end{itemize}
This splits the cusps of $X_{1}\left( p \right)$ into two natural groups, which can be parameterized using any generator $\alpha$ of $\left( \Z / p \Z \right)^{*}/ \pm 1$:
\begin{itemize}
\small
    \item[(C1)]  Cusps of the form $\begin{psmallmatrix}
        \alpha^{k} \\ 0 
    \end{psmallmatrix}$ with  $k = 0, \ldots, \frac{p-1}{2}$;
    \vspace{0.25cm}
    \small 
      \item[(C2)]  Cusps of the form $ \begin{psmallmatrix} 0\\
        \alpha^{k} 
    \end{psmallmatrix} $ with $k = 0, \ldots,\frac{p-1}{2}$. \end{itemize}  

The cusps of type (C1) are all rational, and the cusps of type (C2) form a single orbit under the action of $\Gal \left( \Q \left( \zeta_{p} \right)^{+}/ \Q \right)$.

The cusps of $X_{0}\left( p \right)$ are orbits of $G_{0}\left( p \right) = \Gamma_{0}\left( p \right) / \Gamma_{1}\left( p \right)$. We find that all cusps of type (C1) form a single orbit under this action, and hence they map to a single cusp $c_{\infty} \in X_{0}\left( p \right)$, represented by $\begin{psmallmatrix}
    1 \\ 0 
\end{psmallmatrix}$, which will be referred to as the cusp at infinity, and similarly, all cusps of type (C2) are identified by the above action and hence all map to a single cusp $c_{0} \in X_{0}\left( p \right)$, represented by $\begin{psmallmatrix}
    0 \\ 1
\end{psmallmatrix}$, which we'll call the zero cusp. Note that both cusps are defined over $\Q$.

Suppose that $p$ is congruent to $1$ modulo $4$, and $H$ is the subgroup of non-zero squares modulo $p$, as before. The cusps of $X_{H}\left( p \right)$ are orbits of $G_{H}\left( p \right) = \Gamma_{H}\left( p \right) / \Gamma_{1}\left( p \right), $ and we find that cusps of type (C1) form two orbits under this action, 
\begin{center}
    $\begin{psmallmatrix}
        1 \\ 0 
    \end{psmallmatrix}, 
\begin{psmallmatrix}
        \alpha^{2} \\ 0 
    \end{psmallmatrix}, \ldots,\begin{psmallmatrix}
        \alpha^{s-2} \\ 0 
    \end{psmallmatrix}$ and 
     $\begin{psmallmatrix}
        \alpha \\ 0 
    \end{psmallmatrix}, 
\begin{psmallmatrix}
        \alpha^{3} \\ 0 
    \end{psmallmatrix}, \ldots,\begin{psmallmatrix}
        \alpha^{s-1} \\ 0 
    \end{psmallmatrix};$ 
    \end{center}
and as do the cusps of type (C2):
\begin{center}
    $\begin{psmallmatrix}
        0 \\ 1 
    \end{psmallmatrix}, 
\begin{psmallmatrix}
       0\\ \alpha^{2} 
    \end{psmallmatrix}, \ldots,\begin{psmallmatrix}
    0 \\    \alpha^{s-2} 
    \end{psmallmatrix}$ and 
     $\begin{psmallmatrix}
      0 \\   \alpha 
    \end{psmallmatrix}, 
\begin{psmallmatrix}
     0 \\   \alpha^{3}  
    \end{psmallmatrix}, \ldots,\begin{psmallmatrix}
      0 \\   \alpha^{s-1} 
    \end{psmallmatrix}$ 
    \end{center}
Thus $X_{H}\left( p \right) $ has four cusps $c_{1}, c_{2}, c_{3}, c_{4}$, represented by $\begin{psmallmatrix}
        0 \\ 1 
    \end{psmallmatrix}, 
\begin{psmallmatrix}
        0 \\ \alpha 
    \end{psmallmatrix},  
    \begin{psmallmatrix}
        1 \\  0 
    \end{psmallmatrix}$  and $\begin{psmallmatrix}
        \alpha  \\ 0 
    \end{psmallmatrix}, $  
    respectively. The cusps $c_{1}$ and $c_{2}$ are defined over $\Q$, whilst $c_{3}$ and $c_{4}$ are defined over $\Q \left( \sqrt{p} \right)$, the unique degree 2 subfield of $\Q \left( \zeta_{p} \right)$, and they are Galois conjugates.
      
    \section{Computational Overview} 
Fix a prime $p$, congruent to $1$ modulo $4$. Let $H$ be the subgroup of $\left( \Z / p \Z \right)^{*} $ 
consisting of squares and consider the corresponding modular curve $X_{H}\left( p \right)$. Firstly, using the genus formulas given in the previous sections, we find that there are precisely $6$ values of $p$, as above, for which the modular curve $X_{H}\left( p \right)$ has genus $g_{H} \left( p \right)$ such that $2 \leq g_{H} \left( p \right) \leq 10$ : 
\begin{itemize}
\item $ g_{H}\left( p \right) = 4 : p = 29, 37 $;
\item $ g_{H}\left( p \right) = 5 : p = 41 $;
\item $g_{H}\left( p \right) = 8 : p = 53, 61 $;
\item $ g_{H}\left( p \right) = 9 : p = 73 $.
\end{itemize}

We begin our computations by computing models for the modular curves $X_{0}\left( p \right)$ and $X_{H}\left( p \right)$ for our choices of $p$. This was done using Galbraiths's method \cite{GalMet} which is summarised in the following subsection. A concise summary of this method can also be found in \cite{siksek2019explicit}.

\subsection{Models for Modular Curves: Galbraith's Method}
Let $X$ be a modular curve of genus $g \geq 2 $, corresponding to a congruence subgroup $\Gamma$. The space of holomorphic differentials $\Omega^{1}\left( X \right) $ and the space of weight 2 cups forms $S_{2}\left( \Gamma \right)$ on $\Gamma$ are isomorphic as complex vector spaces. More explicitly, if $\{ f_{1}\left( \tau \right), \ldots, f_{g}\left( \tau \right)\}$ is a basis for $S_{2}\left( \Gamma \right)$, then $\{ f_{1}\left( \tau \right) d\tau , \ldots, f_{g}\left( \tau \right) d\tau \}$ is a basis for $\Omega^{1} \left( X \right) $. Thus, the canonical map is simply:
\begin{center}
  $  \phi : X  \longrightarrow \mathbb{P}^{g-1} $, \\
  $ \phi \left( \tau \right) = \left( f_{1}\left( \tau \right) :  \ldots : f_{g}\left( \tau \right) \right) $.
\end{center}
The canonical map is an embedding if and only if the curve is not hyperelliptic. If $X$ is hyperelliptic, the image of the canonical map is isomorphic to $\mathbb{P}^{g-1}$ and it is described by $1/2 \left( g-1 \right) \left( g - 2 \right)$ quadrics. Thus we can determine whether the curve is hyperelliptic or not by computing the image of the canonical map. 

If $X$ is non-hyperelliptic, the image of the canonical map is a curve of degree $2g-2$ and it will be described by a set of projective equations of the form $\Phi \left( f_{1}, \ldots, f_{g} \right) =0$. In the case that it is a complete intersection, the equations have degrees whose product is $2g-2$. We interpret each $\Phi \left( f_{1}, \ldots, f_{g} \right) =0$ as a modular form which vanishes on the extended upper half plane, and hence these forms may be interpreted as the zero cusps form of weight $2 \cdot \text{deg}\left( \Phi \right)$. This gives a strategy for finding a model of the curve.

To find a defining set of equations for a non-hyperelliptic modular curve, take a basis $ f_{1},$ $\ldots$ $ , f_{g}$ for $S_{2}\left( \Gamma \right)$. For any degree $d$, homogeneous polynomial $F \in \Q \left[ x_{1}, \ldots, x_{g} \right]$, $F\left( f_{1}, \ldots, f_{g} \right)$ is a cusps form of weight $2d$ and level $N$. Let $I = \left[ \text{SL}_{2}\left( \Z \right) : \Gamma \right] = N \displaystyle \prod_{p \vert N} \left( 1 + 1/p \right)$.
\begin{thm}{(Sturm's theorem)}
With notation as above, $F\left( f_{1}, \ldots, f_{g} \right) = 0 $ if and only if the $q$ expansion satisfies $F\left( f_{1}, \ldots, f_{g} \right) = O\left( q^{r} \right)$, where $r = \lfloor dI/6 \rfloor + 1  $.
\end{thm}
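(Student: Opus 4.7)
The plan is to identify $G := F(f_1, \ldots, f_g)$ as a weight-$2d$ cusp form on $\Gamma$ and then invoke the classical Sturm bound for modular forms of that weight. The observation that $G \in S_{2d}(\Gamma)$ is formal: each $f_i$ lies in $S_2(\Gamma)$, so a homogeneous polynomial of degree $d$ in the $f_i$ picks up the automorphy factor raised to the $d$-th power and inherits vanishing at every cusp.

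The forward direction of the equivalence is tautological, so the content is the converse. It suffices to prove the general Sturm bound: if $G \in M_k(\Gamma)$ satisfies $G = O(q^r)$ with $r > kI/12$, then $G = 0$; specializing to $k = 2d$ yields exactly the threshold $r = \lfloor dI/6 \rfloor + 1$. The cleanest route is to descend to $SL_2(\Z)$ via the norm
\[
\widetilde{G}(\tau) \;=\; \prod_{\gamma \in \Gamma \backslash SL_2(\Z)} (G|_k \gamma)(\tau) \;\in\; M_{kI}(SL_2(\Z)).
\]
The factor $\gamma = \mathrm{id}$ contributes $\mathrm{ord}_\infty(G) \geq r$ to $\mathrm{ord}_\infty(\widetilde{G})$, and because $G$ is a cusp form every other factor $G|_k\gamma$ also vanishes at $\infty$ (its expansion there corresponds to $G$ evaluated at the cusp $\gamma\infty$ of $\Gamma$). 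Hence $\mathrm{ord}_\infty(\widetilde{G}) \geq r > kI/12$. The valence formula on $SL_2(\Z)$ caps $\mathrm{ord}_\infty$ of any non-zero weight-$kI$ modular form by $kI/12$, forcing $\widetilde{G} = 0$ and therefore $G = 0$.

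The main technical nuisance is keeping consistent conventions for cusp widths when comparing the $q$-expansion of $G|_k \gamma$ at $\infty$ with the expansion of $G$ at the cusp $\gamma\infty$ of $\Gamma$: in principle a rescaling by the width of that cusp could alter the numerical bound. For the congruence subgroups $\Gamma_H(p) \supset \Gamma_0(p)$ relevant to this paper the cusp $\infty$ has width one, so $q = e^{2\pi i\tau}$ is the natural local parameter at every stage and the bound drops out cleanly. Modulo this bookkeeping the argument is a direct application of the valence formula.
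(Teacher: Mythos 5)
Your argument is correct. Note first that the paper does not actually prove this statement: it quotes Sturm's theorem as a known classical result inside the description of Galbraith's method, so there is no in-paper proof to compare against; your write-up supplies the standard self-contained justification. The reduction is sound: $G=F(f_1,\dots,f_g)$ lies in $S_{2d}(\Gamma)$, the forward implication is trivial, and for the converse the norm $\widetilde{G}=\prod_{\gamma\in\Gamma\backslash \SL_2(\Z)}G|_k\gamma$ is a well-defined holomorphic modular form of weight $kI$ on $\SL_2(\Z)$ (each factor depends only on the coset $\Gamma\gamma$, and right translation permutes the cosets). The valence formula then caps $\ord_\infty$ of a nonzero such form by $kI/12=dI/6<\lfloor dI/6\rfloor+1=r$, while the identity factor contributes at least $r$ and the remaining factors contribute nonnegatively, forcing $\widetilde{G}=0$ and hence $G=0$. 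Your worry about cusp widths is the right one to flag, but it is even less of an issue than you suggest: the width only matters for interpreting $O(q^r)$ at the identity coset (and is $1$ here since $\Gamma_1(p)\subseteq\Gamma_H(p)$ contains the translation matrix), whereas for the non-identity cosets you only need $\ord_\infty(G|_k\gamma)\geq 0$, which holds in whatever fractional power of $q$ the expansion lives, so no rescaling can spoil the bound. The one cosmetic caveat is that the paper's displayed formula $I=N\prod_{p\mid N}(1+1/p)$ is the index of $\Gamma_0(N)$ rather than of a general $\Gamma$; your proof correctly uses the true index $[\SL_2(\Z):\Gamma]$, which is what the bound requires.
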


Using the above result, we have reduced our problem to an  elementary linear algebra problem, namely that of computing the vector space of all homogeneous $F$ of degree $d$ such that $F\left( f_{1}, \ldots, f_{g} \right) = 0$. This can be repeated for all divisors $d$ of  $2g -2$, resulting in a system of equations that cuts out a model for $X$ in $\mathbb{P}^{g-1}$.

Suppose $X$ is a hyperelliptic modular curve of genus $g \geq 2$. As before, there is an isomorphism $S_{2}\left( \Gamma \right) \cong \Omega \left( X \right)$. Let  $ f_{1},$ $\ldots$ $ , f_{g}$ be a basis for $S_{2}\left( \Gamma \right)$, and since $X$ is modular, we can choose a basis such that $f_{i}$ has $q$-expansion $ q^{i} + \ldots $ . As $X$ is hyperelliptic, it has an affine model of the form  
\begin{center}
    $X : y^{2} = f\left( x \right)$ 
\end{center}
with $f\in \Q \left[ x \right]$, of degree $2g+1$ or $2g +2$. Then $\Omega \left( X \right)$ has a basis $dx/y, xdx/y, \ldots, x^{g-1}dx/y$. Note that this basis is not necessarily the same as the basis $f_{1}dq/q, \ldots, f_{g}dq/q$. Changing coordinates if necessary, we can assume that the infinity cusp $c_{\infty}$ of $X$, the class of $\infty \in \mathbb{H}^{*}$ in the quotient interpretation, is one of the points at infinity of the hyperelliptic curve in our affine model. Then $q$ is a uniformizer at $c_{\infty}$. To differentiate between the cases when $f$ has degree $2g+2$ or $2g+1$, we use the following result.

\begin{proposition}
Suppose $X$ is a hyperelliptic curve of genus $g \geq 2$, and has an affine model 
\begin{center}
$X : y^{2} = a_{2g+2}x^{2g+2} + \ldots + a_{0}$ with $a_{2g+2} \neq 0$
\end{center}
and let $\infty_{+}$ be one of the points at infinity. Then 
\begin{center}
   $\text{ord}_{\infty_{+}}\left( dx/y \right) = g-1 $, $\text{ord}_{\infty_{+}}\left( xdx/y \right) = g-2 $, $\ldots$ ,  $\text{ord}_{\infty_{+}}\left( x^{g-1}dx/y \right) = 0 $
\end{center}

If $X$ has affine model 
\begin{center}
$X : y^{2} = a_{2g+1}x^{2g+1} + \ldots + a_{0}$ with $a_{2g+1} \neq 0$
\end{center}
and $\infty$ is the unique point at infinity on the above model. Then 
\begin{center}
   $\text{ord}_{\infty} \left( dx/y \right) = 2(g-1) $, $\text{ord}_{\infty}\left( xdx/y \right) = 2(g-2) $, $\ldots$ ,  $\text{ord}_{\infty} \left( x^{g-1}dx/y \right) = 0 $
\end{center}
\end{proposition}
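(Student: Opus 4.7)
The proof is a direct local computation at infinity after passing to an appropriate affine chart, handled separately in the even-degree and odd-degree cases. The plan is to introduce the standard change of variables $u = 1/x$, $v = y/x^{g+1}$, which compactifies the affine model by covering the points at infinity, and then to read off the orders of $x$, $y$, and $dx$ in terms of a local uniformizer.

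In the case $\deg f = 2g+2$, substituting yields the equation
\begin{equation*}
v^{2} = a_{2g+2} + a_{2g+1} u + \cdots + a_{0} u^{2g+2},
\end{equation*}
and the two points at infinity correspond to $(u,v) = (0, \pm\sqrt{a_{2g+2}})$, which are smooth because $a_{2g+2}\ne 0$. At either of these points $v$ is a unit in the local ring, so $u$ is a uniformizer. Since $x = 1/u$ and $y = v/u^{g+1}$, I would compute
\begin{equation*}
\frac{x^{k}\,dx}{y} \;=\; \frac{u^{-k}\cdot(-u^{-2}\,du)}{v\,u^{-(g+1)}} \;=\; -\,\frac{u^{g-1-k}}{v}\,du,
\end{equation*}
and read off $\text{ord}_{\infty_+}(x^{k}\,dx/y) = g-1-k$ directly, giving all the claimed orders.

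The case $\deg f = 2g+1$ is the one place one must be careful, because the affine chart at infinity becomes
\begin{equation*}
v^{2} = a_{2g+1} u + a_{2g} u^{2} + \cdots + a_{0} u^{2g+2},
\end{equation*}
and the unique point $(u,v) = (0,0)$ is smooth on this chart (the partial derivative with respect to $u$ is $-a_{2g+1}\ne 0$), but it is the variable $v$, not $u$, that is a uniformizer. Solving the equation locally gives $u = v^{2}/a_{2g+1} + O(v^{3})$, so $\text{ord}_\infty(u) = 2$, hence $\text{ord}_\infty(x) = -2$ and $\text{ord}_\infty(y) = \text{ord}_\infty(v\,u^{-(g+1)}) = 1 - 2(g+1) = -(2g+1)$. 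Differentiating, $du = (2v/a_{2g+1})\,dv + O(v^{2})\,dv$, so $dx = -du/u^{2}$ has order $1 - 4 = -3$, and therefore
\begin{equation*}
\text{ord}_{\infty}\!\left(\frac{x^{k}\,dx}{y}\right) \;=\; -2k \;-\; 3 \;+\; (2g+1) \;=\; 2(g-1-k),
\end{equation*}
as claimed.

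The main obstacle is merely notational, namely tracking the correct uniformizer in the odd-degree case; there is no deep idea to invoke beyond the standard desingularization of a hyperelliptic curve at infinity, and once the chart is set up the orders are forced by the valuations of $x$, $y$, and $dx$.
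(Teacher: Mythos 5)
Your computation is correct: the chart $u=1/x$, $v=y/x^{g+1}$ is the standard desingularization at infinity, and your valuations of $x$, $y$, and $dx$ in each case (with $u$ the uniformizer when $\deg f = 2g+2$ and $v$ the uniformizer when $\deg f = 2g+1$) yield exactly the stated orders $g-1-k$ and $2(g-1-k)$. The paper itself states this proposition without any proof, treating it as a standard fact in its summary of Galbraith's method, so there is nothing to compare against; your argument is the usual one and fills that gap correctly. (A minor cosmetic point: in the odd-degree case $u$ is actually an even power series in $v$, so $u = v^{2}/a_{2g+1} + O(v^{4})$, but your weaker $O(v^{3})$ estimate already suffices for the valuation count.)
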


Using the $q$-expansions of $f_{1}, \ldots, f_{g}$, we determine if any linear combination of $f_{1}dq/q, \ldots, f_{g}dq/q$ has order $2(g-1)$ at $c_{\infty}$. If such a linear combination exists, then we have a degree $2g+1$ model; otherwise the polynomial in our affine model has degree $2g+2$. 

From the above valuations and the $q$-expansions of $f_{1}, \ldots, f_{g}$, we have 
\begin{align*}
& dx/y = \alpha_{g} f_{g}\left( q \right) dq/q \\
& xdx/y = \beta_{g-1}f_{g-1}\left( q \right) dq/q + \beta_{g}f_{g}\left( q \right) dq/q
\end{align*}
for some constants $\alpha_{g} \neq 0$, $\beta_{g-1} \neq 0$ and $\beta_{g}$. The change of coordinates 
\begin{center}
  $ x \mapsto rx, y \mapsto sy$
\end{center}
fixes the points at infinity, and scales the differentials as follows,
\begin{align*}
 & dx/y \mapsto \left( r/s \right) dx/y \\
 & xdx/y \mapsto \left( r^{2}/s \right) xdx/y 
 \end{align*}
 and thus we may take $\alpha_{g} = \beta_{g-1} =1 $. The change of coordinates 
 \begin{center}
  $ x \mapsto x + t , y \mapsto y$
 \end{center}
 also fixes the points at infinity, and transforms the differentials as follows
 \begin{align*}
 & dx/y \mapsto  dx/y \\
 & xdx/y \mapsto  xdx/y + tdx/y 
 \end{align*}
 and hence we may take $\beta_{g} = 0$. Thus we can assume 
 \begin{align*}
 & dx/y  =  f_{g}\left( q \right) dq/q \\
 & xdx/y = f_{g-1}\left( q \right) dq/q
 \end{align*}
and hence 
\begin{center}
$ x = f_{g-1}\left( q \right) / f_{g}\left( q \right) $  and $y = \left( dx/dq \right) \left( q/f_{g}\left( q \right)\right)$.
\end{center}
As in the previous case, we have reduced the problem to an elementary linear algebra exercise, as we now use these expressions to search for a polynomial $f \in \Q \left[ x \right]$ of degree $2g+1$ or $2g+2$, such that 
\begin{center}
   $y^{2} - f\left( x \right) = 0$ 
\end{center}
using the $q$-expansions of $f_{g}\left( q \right)$ and $f_{g-1}\left( q \right)$; and Sturm's theorem.

\subsection{The Quotient Map $X_{H}\left( p \right) \longrightarrow X_{0}\left( p \right)$}

Suppose both curves $X_{H}\left(p \right)$ and $X_{0}\left( p \right)$ are not hyperelliptic. This is the case for $p = 53,61$ and $73$. Let $f_{1}, \ldots, f_{g}$ be a basis of $S_{2}\left( \Gamma_{0}\left( p \right) \right)$. This can be extended to a basis $f_{1}, \ldots, f_{g}, f_{g+1} \ldots, f_{g_{H}}$ of  $S_{2}\left( \Gamma_{H}\left( p \right) \right)$. We compute models for the two curves with respect to the above bases and with coordinates $\left( x_{1}: \ldots :x_{g_{H}} \right) =\left( f_{1}: \ldots : f_{g_{H}} \right)$. Then the degree $2$ quotient map is simply the projection:
\begin{center}
   $ \varphi : X_{H}\left( p \right) \longrightarrow X_{0}\left( p \right) $ \\
   $  \varphi \left( x_{1}, \ldots, x_{g_{H}} \right) = \left( x_{1}, \ldots, x_{g} \right) $.
\end{center}

In the cases $p= 29, 37$ and $41$, the curve $X_{0}\left( p \right)$ is hyperelliptic, whilst $X_{H}\left( p\right)$ is not. Take a basis  $ f_{1},$ $\ldots$ $ , f_{g}$ for $S_{2}\left( \Gamma_{0}\left( N\right) \right)$, such that $f_{i}$ has $q$-expansion $ q^{i} + \ldots $. As described in the previous subsection, we find an affine  model of $X_{0}\left( p \right)$ 
\begin{center}
    $X_{0}\left( p \right): y ^{2} = f\left( x \right)$
\end{center}
where $f \in \Q \left[ x \right]$ has degree $2g+1$ or $2g +2$, and 
\begin{center}
$ x = f_{g-1}\left( q \right) / f_{g}\left( q \right) $  and $y = \left( dx/dq \right) \left( q/f_{g}\left( q \right) \right)$.
\end{center}
The basis  $ f_{1},$ $\ldots$ $ , f_{g}$ can be extended to a basis of $S_{2}\left( \Gamma_{H} \left( p \right) \right)$, $ f_{1},$ $\ldots$ $ , f_{g_{H}}$, and we compute the canonical model of the non-hyperelliptic curve $X_{H}\left( p \right)$ with respect to this basis.  The model of $X_{H}\left( p \right)$ will have coordinates
\begin{center}
$\left( x_{1} :  \ldots : x_{g-1} :  x_{g}:  \ldots :  x_{g_{H}} \right) = \left( f_{1} :  \ldots : f_{g-1} :  f_{g}:  \ldots :  f_{g_{H}} \right)$
\end{center}
and the degree $2$ map $ \varphi : X_{H}\left( p \right) \longrightarrow X_{0}\left( p \right)$ is simply 
\begin{center}
  $\left( x_{1} :  \ldots : x_{g-1} :  x_{g}:  \ldots :  x_{g_{H}} \right) \mapsto \left( x \left( x_{g-1}, x_{g} \right), y \left( x_{g-1}, x_{g} \right) \right)$  
\end{center}
where $x$ and $y$ are given by 
\begin{center}
$ x\left( x_{g-1}, x_{g} \right)  = x_{g-1} /x_{g} $  and $y = \sqrt{f \left( x\left( x_{g-1}, x_{g} \right) \right) }$.
\end{center}

\subsection{The Cuspidal Subgroup $C_{H}\left( p \right)$}
The cusps of $X_{H}\left( p \right)$, $c_{1}, \ldots, c_{4}$ are the inverse images of the cusps of $X_{0}\left( p \right)$ under the map $\varphi$. Then $C_{H}\left( p \right) $ is the subgroup generated by these cusps 
\begin{center}
    $C_{H} \left( p \right) = \langle \left[ c_{2} - c_{1} \right], \left[ c_{3} - c_{1} \right] , \left[ c_{4} - c_{1} \right] \rangle$.
\end{center}
This is a subgroup of $J_{H}\left( \Q\left( \sqrt{p } \right)\right)_{\text{tors}}$ by the theorems of Manin \cite{Manin} and Drinfeld \cite{Drinfeld}. To find a presentation of this group, and any relations amongst the cuspidal divisors, we reduce modulo a prime of good reduction in $\Q \left( \sqrt{p} \right)$ and use the following result. 

\begin{lemma}
Let $S = J_{H}\left( p \right) \left( \Q \left( \sqrt{p} \right) \right)$, and let $\mathfrak{q}$ be any prime in $\Q \left( \sqrt{p} \right)$, not lying above $p$ or $2$. Then reduction modulo $\mathfrak{q}$ induces an injection 
\begin{center}
    $S \longrightarrow J_{H}\left( \mathbb{F}_{\mathfrak{q}} \right)$
\end{center}
\end{lemma}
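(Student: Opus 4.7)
The plan is to deduce this injection from two standard ingredients: good reduction of the Jacobian at $\mathfrak{q}$, and the classical fact that reduction is injective on torsion of order prime to the residue characteristic. Interpreting $S$ as the (prime-to-residue-characteristic part of the) torsion subgroup $J_H(p)(\Q(\sqrt{p}))_{\mathrm{tors}}$ — which is what is actually needed for reading off relations among the cuspidal divisors — the argument splits naturally into the steps below.

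First I would establish that $J_H(p)$ has good reduction at every prime $\mathfrak{q}$ of $\Q(\sqrt p)$ not lying over $p$. The curve $X_H(p)$ sits as an intermediate quotient $X_1(p) \to X_H(p) \to X_0(p)$, and by the work of Deligne--Rapoport (and Katz--Mazur) the integral models of $X_1(p)$ and $X_0(p)$ are smooth over $\Spec \Z[1/p]$. The quotient $X_H(p)$ therefore also admits a smooth proper model over $\Spec \Z[1/p]$, so in particular its Jacobian $J_H(p)$ has good reduction at every prime of $\Q(\sqrt p)$ not above $p$; equivalently, the Néron model of $J_H(p)$ over the local ring at $\mathfrak{q}$ is an abelian scheme, which I will denote $\mathscr{J}/\mathcal{O}_{\mathfrak{q}}$.

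Next I would appeal to the standard result that for an abelian variety $A/K$ over a local field with good reduction and residue characteristic $q$, the reduction map $A(K) \to \tilde{A}(\F_\mathfrak{q})$ has kernel equal to the formal group $\hat{\mathscr{J}}(\mathfrak{m}_\mathfrak{q})$, which is a pro-$q$ group. Consequently, the reduction map is injective on any subgroup whose order is prime to $q$. Applying this to the Néron model $\mathscr{J}$ produced in the previous step, any finite subgroup of $J_H(p)(\Q(\sqrt p))$ of order prime to $q$ injects into $J_H(\F_{\mathfrak{q}})$. The hypothesis that $\mathfrak{q}$ lies neither above $p$ nor above $2$ forces $q \geq 3$ and $q \neq p$; in particular the entire $2$-primary part of the torsion injects, which is the feature that will be needed when reading off the cuspidal subgroup (whose order is not a priori known to be coprime to a single fixed residue characteristic, but can be pinned down by combining two primes $\mathfrak q_1, \mathfrak q_2$ of distinct residue characteristics).

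The only place that requires care is the good-reduction step for $X_H(p)$ itself: one must exhibit a model of $X_H(p)$ that is smooth over $\Spec \Z[1/p]$ and then use that the Jacobian of such a model is the Néron model. I expect this to be the main expository obstacle, although it is essentially bookkeeping on top of the integral theory of $X_1(p)$. Everything else reduces to the Serre--Tate criterion and the standard description of the kernel of reduction via the formal group.
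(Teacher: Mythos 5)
There is a genuine gap. The paper itself disposes of this lemma with a citation to Katz, and the content of that reference is precisely the point your argument stops short of: the lemma asserts that reduction modulo a \emph{single} prime $\mathfrak{q}$ (not above $2$ or $p$) is injective on the \emph{entire} torsion subgroup, including its $q$-primary part, where $q$ is the residue characteristic of $\mathfrak{q}$. Your formal-group argument only identifies the kernel of reduction with $\hat{\mathscr{J}}(\mathfrak{m}_{\mathfrak{q}})$, a pro-$q$ group, and hence only yields injectivity on the prime-to-$q$ torsion. Your proposed repair --- combining two primes $\mathfrak{q}_1,\mathfrak{q}_2$ of distinct residue characteristics --- proves a different (weaker per-prime) statement and does not establish the lemma as written; it would also change how the lemma is applied, since as stated the lemma gives $|J_H(p)(\Q(\sqrt p))_{\mathrm{tors}}|$ dividing $|J_H(\F_{\mathfrak{q}})|$ for each admissible $\mathfrak{q}$ separately.

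The missing step is the standard fact that the formal group of an abelian scheme over a complete discrete valuation ring of residue characteristic $q$ is \emph{torsion-free} whenever the absolute ramification index satisfies $e < q-1$ (the formal logarithm then converges and embeds $\hat{\mathscr{J}}(\mathfrak{m}_{\mathfrak{q}})$ into a $\Q$-vector space). In the present situation $\Q(\sqrt p)$ is ramified only at $p$, so any $\mathfrak{q}$ not above $p$ has $e=1$, and the hypothesis that $\mathfrak{q}$ does not lie above $2$ forces $q\geq 3$, whence $e = 1 < q-1$. This is the actual role of the ``not above $2$'' hypothesis; in your write-up you instead read it as merely guaranteeing that the $2$-primary torsion injects, which misses why the condition is there. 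Your good-reduction step (via the Deligne--Rapoport/Katz--Mazur smooth models of $X_1(p)$ and $X_0(p)$ over $\Z[1/p]$ and the intermediate quotient) is fine and is implicitly assumed by the paper; also note that $S$ must be read as the torsion subgroup, as you did, since reduction is certainly not injective on the full Mordell--Weil group when the rank is positive.
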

\begin{proof}
See \cite{katz}.
\end{proof}

By our choice of model (and notation), $c_{3}$ and $c_{4}$ are defined over $\Q \left( \sqrt{p} \right)$ and Galois conjugate, and $c_{1}$, $c_{2}$ are rational. Thus the Galois action on $C_{H}\left( p \right)$ is clear, and by   taking Galois invariants we obtain the rational cuspidal subgroup $C_{H}\left( p \right) \left( \Q \right)$.

\subsection{Torsion Subgroup of $J_{H}\left( p \right)$}
For any prime ideal $\mathfrak{q}$ of $\mathcal{O}$, the ring of integers of $\Q \left( \sqrt{p} \right)$, whose norm is coprime to $p$, reduction modulo $\mathfrak{q}$ induces an injection 
\begin{center}
    $J_{H}\left( p \right) \left( \Q \left( \sqrt{p} \right) \right)_{\text{tors}}  \rightarrow J_{H}\left( \mathbb{F}_{\mathfrak{q}} \right)$.
\end{center}
Reducing modulo multiple prime ideals, gives an upper bound on the size of  $J_{H}\left( p \right) \left( \Q \left( \sqrt{p} \right)  \right)_{\text{tors}} $, and in our computations, we found sufficiently many primes such that this bound matched to size of $C_{H}\left( p \right)$, and hence we were able to conclude $ J_{H}\left( p \right) \left( \Q \left( \sqrt{p} \right) \right)_{\text{tors}} = C_{H} \left( p \right)$.

\section{Proof of the Main Theorem}

The \texttt{MAGMA} code used to carry out the computations presented in this section can be found in the following online repository:
\begin{center}
\href{ https://github.com/ElviraLupoian/XHSquares}{https://github.com/ElviraLupoian/XHSquares}
\end{center}
\subsection{ Curves of Genus 4}
There are 2 primes for which $X_{H}\left( p \right)$ has genus 4, namely $29$ and $37$. Both curves $X_{H}\left(p\right)$ are not hyperelliptic, so their canonical model is the intersection of a quadric and a cubic in $\mathbb{P}^{3}$, and in both cases $X_{0}\left( p \right)$ has genus 2.
\subsubsection*{p = 29}
A canonical model of $X_{H}\left( 29 \right)$ is given by the zero locus of 
\begin{center}
$ c = x_{1}^2x_{3} - 3x_{1}x_{2}x_{3} - x_{1}x_{3}^2 - x_{2}^3 + x_{2}^2x_{3} + 
    x_{2}x_{3}^2 + 5x_{2}x_{3}x_{4} - x_{2}x_{4}^2 + 4x_{3}^3 - 5x_{3}^2x_{4} - 
    3x_{3}x_{4}^2; $ \\
$ q = x_{1}x_{4} - x_{2}x_{3} + x_{2}x_{4} + x_{3}^2 - 3x_{3}x_{4} - 2x_{4}^2.$
\end{center}
The cusps of $X_{H}\left( 29 \right)$ are
\begin{align*}
    c_{1} & = \left( 1: 0 : 0 : 0 \right),  \ c_{2} = \left( 2 : 0 : 0 : 1 \right); \\
    c_{3} & = \left( 3 \sqrt{29} + 18 :  \sqrt{29} + 5 : 1/2\sqrt{29} + 5/2 : 1 \right); \\  
    c_{4}  & = \left( -3 \sqrt{29} + 18 :   - \sqrt{29} + 5 : - 1/2\sqrt{29} + 5/2 : 1 \right).
    \end{align*}
The cuspidal subgroup $C_{H}\left( 29 \right) = \langle \left[ c_{2} - c_{1} \right] ,\left[ c_{3} - c_{1} \right],\left[ c_{4} - c_{1} \right] \rangle$ is isomorphic to $ \left( \mathbb{Z} / 3 \mathbb{Z} \right) \times \left( \mathbb{Z} /21 \mathbb{Z} \right) $, and it fact, $ \left[ c_{3} - c_{1} \right] $ and $ \left[ c_{4} - c_{1} \right]$ are sufficient to generate the entire group. Taking Galois invariants, the resulting rational cuspidal subgroup is 
\begin{center}
$C_{H}\left( 29 \right)  \left( \mathbb{Q} \right) = \langle \left[ c_{3} + c_{4} - 2c_{1} \right] \rangle \cong  \left( \mathbb{Z} /21 \mathbb{Z} \right) $ 
\end{center}
The upper bound for  $\vert J_{H} \left( 29 \right) \left( \mathbb{Q} \left( \sqrt{29} \right)\right)_{\textbf{tors}} \vert $ resulting from reduction modulo multiple primes is exactly $63$, and thus we conclude 
\begin{center}
    $ J_{H} \left( 29 \right) \left( \mathbb{Q} \left( \sqrt{29} \right)\right)_{\textbf{tors}} = C_{H}\left( 29 \right) $.
\end{center}

\subsubsection*{p = 37}
A canonical model of $X_{H}\left( 37 \right)$ is given by the zero locus of 
\begin{align*}
 c &  = 2x_{1}^2x_{4} - 5x_{1}x_{4}^2 - 2x_{2}^3 + 2x_{2}^2x_{3} - 2x_{2}x_{3}^2 + 
    6x_{2}x_{3}x_{4} - 6x_{2}x_{4}^2 - 3x_{3}^3 + 8x_{3}^2x_{4} - 9x_{3}x_{4}^2
    + 6x_{4}^3; \\
 q & = x_{1}x_{3} - x_{2}^2 - 2x_{3}x_{4}. 
\end{align*}
The cusps of $X_{H}\left( 37 \right)$ are
\begin{align*}
    c_{1} & = \left( 1: 0 : 0 : 0 \right),  \ c_{2} = \left( 2 : 0 : 1 : 1 \right); \\
    c_{3} & = \left(  \sqrt{37} + 5 : 6 :  \sqrt{37 } -1 : 2 \right);   \\  
   c_{4} & = \left(  -\sqrt{37} + 5 : 6 :  -\sqrt{37 } -1 : 2 \right). 
      \end{align*}
The cuspidal subgroup $C_{H}\left( 37 \right) = \langle \left[ c_{2} - c_{1} \right] ,\left[ c_{3} - c_{1} \right],\left[ c_{4} - c_{1} \right] \rangle$ is isomorphic to $ \left( \mathbb{Z} / 5 \mathbb{Z} \right) \times \left( \mathbb{Z} /15 \mathbb{Z} \right) $, and it fact, $ \left[ c_{3} - c_{1} \right] $ and $ \left[ c_{4} - c_{1} \right]$ are sufficient to generate the entire group. Taking Galois invariants, the resulting rational cuspidal subgroup is 
\begin{center}
$C_{H}\left( 37 \right)  \left( \mathbb{Q} \right) = \langle \left[ c_{3} + c_{4} - 2c_{1} \right] \rangle \cong  \left( \mathbb{Z} /15 \mathbb{Z} \right). $ 
\end{center}

The upper bound for  $\vert J_{H} \left( 37  \right) \left( \mathbb{Q} \left( \sqrt{37} \right)\right)_{\textbf{tors}} \vert $ obtained from reducing modulo a number of primes is exactly 75, and hence the two groups must be equal;
\begin{center}
    $ J_{H} \left( 37 \right) \left( \mathbb{Q} \left( \sqrt{37} \right)\right)_{\textbf{tors}} = C_{H}\left( 37 \right) \cong \Z / 5 \Z \times \Z / 15 \Z $.
\end{center}

\subsection{ Curves of Genus 5}
There is a single prime congruent to $1$ modulo $4$ for which $X_{H}\left( p \right)$ has genus 5, namely $41$. The curve $X_{H}\left(41\right)$ is not hyperelliptic, so its canonical model is the intersection of $3$ quadrics in  $\mathbb{P}^{4}$. 

A canonical model of $X_{H}\left( 41 \right)$ is given by the zero locus of 
\begin{align*}
Q_{1} & = x_{1}x_{3} - x_{2}^2 + 2x_{2}x_{4} + 2x_{2}x_{5} - 2x_{3}^2 - x_{3}x_{4} + 
    x_{3}x_{5} - 2x_{4}^2 - 2x_{4}x_{5} - x_{5}^2; \\
Q_{2} &= x_{1}x_{4} - x_{2}x_{3} + x_{2}x_{4} - x_{3}^2 + x_{3}x_{4} + 2x_{3}x_{5} - 2x_{4}^2 
    - 2x_{4}x_{5}; \\
Q_{3} & = x_{1}x_{5} - x_{2}x_{5} - x_{3}^2 + 2x_{3}x_{4} + 2x_{3}x_{5} - 2x_{4}^2 - 
    x_{4}x_{5} + x_{5}^2.
\end{align*}

The cusps of $X_{H}\left( 41 \right)$ are
\begin{align*}
    c_{1} & = \left( 1: 0 : 0 : 0 : 0  \right),  \ c_{2} = \left( 0 : 1 : 0 : 0 : 1 \right); \\
    c_{3} & = \left(18\sqrt{41} + 114 : 6\sqrt{41} + 54 : 6\sqrt{41} + 42 : 3\sqrt{41} + 21 : 12  \right);   \\  
   c_{4} & = \left(-18\sqrt{41} + 114 : -6\sqrt{41} + 54 : -6\sqrt{41} + 42 : -3\sqrt{41} + 21 : 12  \right). 
   \end{align*}
The cuspidal subgroup $C_{H}\left( 41 \right) = \langle \left[ c_{2} - c_{1} \right] ,\left[ c_{3} - c_{1} \right],\left[ c_{4} - c_{1} \right] \rangle$ is isomorphic to $ \left( \mathbb{Z} / 8 \mathbb{Z} \right) \times \left( \mathbb{Z} /40 \mathbb{Z} \right) $, and it fact, $ \left[ c_{3} - c_{1} \right] $ and $ \left[ c_{4} - c_{1} \right]$ are sufficient to generate the entire group. Taking Galois invariants, the resulting rational cuspidal subgroup is 
\begin{center}
$C_{H}\left( 41 \right)  \left( \mathbb{Q} \right) = \langle \left[ c_{3} + c_{4} - 2c_{1} \right] \rangle \cong  \left( \mathbb{Z} /40 \mathbb{Z} \right). $ 
\end{center}
The upper bound for $\vert J_{H}\left( 41 \right) \left( \Q \left( \sqrt{41} \right) \right)_{\text{tors}} \vert$ obtained from reducing modulo multiple primes is $320$, and hence it must equal the cuspidal subgroup;
\begin{center}
    $C_{H}\left( 41 \right) =J_{H}\left( 41 \right) \left( \Q \left( \sqrt{41} \right) \right)_{\text{tors}} \cong \Z / 8 \Z \times \Z / 40 \Z.$
\end{center}

\subsection{ Curves of Genus 8}
There are two primes congruent to $1$ modulo $4$ for which $X_{H}\left( p \right)$ has genus 8, namely $53$ and $61$. Both curves $X_{H}\left(p\right)$ are not hyperelliptic. 

\subsubsection*{p = 53}
A canonical model of $X_{H}\left( 53 \right)$ is given by the zero locus of the following fifteen quadrics and one cubic.
\allowdisplaybreaks
\begin{align*}
 Q_{1} & = 6x_{1}x_{3} - 6x_{2}^2 + 15x_{4}x_{6} + 5x_{4}x_{7} - 23x_{4}x_{8} - 15x_{5}^2 +
    33x_{5}x_{6} + 10x_{5}x_{7} + 60x_{5}x_{8} - 50x_{6}^2  \\ & - 54x_{6}x_{7} - 
    39x_{6}x_{8} - 18x_{7}^2 + 37x_{7}x_{8} + 15x_{8}^2; \\
Q_{2} & = 6x_{1}x_{4} - 6x_{2}x_{3}- 6x_{4}^2 - 43x_{4}x_{6} - 5x_{4}x_{7} - 17x_{4}x_{8} + 43x_{5}^2 - 13x_{5}x_{6} - 10x_{5}x_{7} \\ & - 226x_{5}x_{8}  - 32x_{6}^2 + 
    68x_{6}x_{7} + 205x_{6}x_{8} + 32x_{7}^2 + 9x_{7}x_{8}  + 225x_{8}^2; \\
Q_{3} & = 3x_{1}x_{5} - 3x_{3}^2 - 16x_{4}x_{6} - 28x_{4}x_{8} + 16x_{5}^2 - x_{5}x_{6} - 3x_{5}x_{7} - 100x_{5}x_{8} - 19x_{6}^2 + 23x_{6}x_{7}  \\ & + 73x_{6}x_{8} + 11x_{7}^2 + 13x_{7}x_{8} + 144x_{8}^2; \\ 
Q_{4} & = 3x_{1}x_{6} - 3x_{3}x_{4} + 3x_{4}^2 - 9x_{4}x_{6} - 2x_{4}x_{7} - 31x_{4}x_{8} + 12x_{5}^2 - 6x_{5}x_{6} + 2x_{5}x_{7} - 66x_{5}x_{8} \\ & - 16x_{6}^2  + 3x_{6}x_{7}  + 48x_{6}x_{8} + 6x_{7}^2 + 14x_{7}x_{8} + 114x_{8}^2; \\
Q_{5} & = 3x_{1}x_{7} - 3x_{4}x_{5} + 5x_{4}x_{6} - 3x_{4}x_{7} + 23x_{4}x_{8} - 5x_{5}^2 - x_{5}x_{6} + 38x_{5}x_{8} + 14x_{6}^2 + 2x_{6}x_{7} \\ & - 23x_{6}x_{8} - x_{7}^2- 17x_{7}x_{8} - 81x_{8}^2; \\
Q_{6} & = 3x_{1}x_{8} - 2x_{4}x_{6} - x_{4}x_{7} + 2x_{4}x_{8} - x_{5}^2 + 4x_{5}x_{6} + x_{5}x_{7} + x_{5}x_{8} - x_{6}^2  + x_{6}x_{7} - 4x_{6}x_{8} \\ & + x_{7}^2 - 6x_{7}x_{8} - 9x_{8}^2; \\
Q_{7} & = 3x_{2}x_{4} - 3x_{3}^2 - 3x_{4}x_{5} - 5x_{4}x_{6} - 3x_{4}x_{7} + x_{4}x_{8} + 5x_{5}^2 - 5x_{5}x_{6} + 3x_{5}x_{7} - 11x_{5}x_{8} \\ & + 10x_{6}^2  + 10x_{6}x_{7} + 5x_{6}x_{8} + 4x_{7}^2 - 10x_{7}x_{8} + 3x_{8}^2;\\
Q_{8} & = 3x_{2}x_{5} - 3x_{3}x_{4} - 11x_{4}x_{6} + x_{4}x_{7} - 3x_{4}x_{8} + 14x_{5}^2 -11x_{5}x_{6} - 7x_{5}x_{7} - 86x_{5}x_{8} + 3x_{6}^2 \\ & 
+ 34x_{6}x_{7} +  62x_{6}x_{8} + 13x_{7}^2 - 11x_{7}x_{8} + 75x_{8}^2; \\
    Q_{9} & = 3x_{2}x_{6} - 3x_{4}^2 + 3x_{4}x_{5} - 12x_{4}x_{6} + 4x_{4}x_{7} + 2x_{4}x_{8} 
    + 12x_{5}^2 - 6x_{5}x_{6} - 10x_{5}x_{7} - 75x_{5}x_{8}  \\ & - 4x_{6}^2 + 
    30x_{6}x_{7} + 57x_{6}x_{8} + 9x_{7}^2 - 4x_{7}x_{8} + 63x_{8}^2; \\
    Q_{10} & = 3x_{2}x_{7} + 3x_{4}x_{6} - x_{4}x_{7} + x_{4}x_{8} - 6x_{5}^2 + 3x_{5}x_{6} + 
    x_{5}x_{7} + 33x_{5}x_{8} - 2x_{6}^2 - 15x_{6}x_{7} \\ & - 21x_{6}x_{8}  - 6x_{7}^2
    + 4x_{7}x_{8} - 27x_{8}^2; \\
    Q_{11} & = 3x_{2}x_{8} + 2x_{4}x_{6} + 2x_{4}x_{8} - 2x_{5}^2 - x_{5}x_{6} + 11x_{5}x_{8} + 
    5x_{6}^2 - x_{6}x_{7} - 11x_{6}x_{8} - x_{7}^2 \\ & - 2x_{7}x_{8} - 18x_{8}^2;\\ 
    Q_{12} & = 6x_{3}x_{5} - 6x_{4}^2 + 5x_{4}x_{6} + 13x_{4}x_{7} + 43x_{4}x_{8} - 5x_{5}^2 + 
    17x_{5}x_{6} - 10x_{5}x_{7} - 16x_{5}x_{8} + 4x_{6}^2  \\ & + 26x_{6}x_{7}   - 
    17x_{6}x_{8} + 2x_{7}^2 - 51x_{7}x_{8} - 57x_{8}^2; \\
    Q_{13} & = 3x_{3}x_{6} - 3x_{4}x_{5} + 2x_{4}x_{6} + x_{4}x_{7} + 13x_{4}x_{8} + x_{5}^2 + 
    2x_{5}x_{6} + 2x_{5}x_{7} - 4x_{5}x_{8} + 4x_{6}^2 + 5x_{6}x_{7} - 
    \\ &  11x_{6}x_{8} + 2x_{7}^2 - 24x_{7}x_{8} - 21x_{8}^2; \\    
Q_{14} & = 3x_{3}x_{7} - 2x_{4}x_{6} - x_{4}x_{7} - 10x_{4}x_{8} + 2x_{5}^2 - 5x_{5}x_{6} - 
    2x_{5}x_{7} - 8x_{5}x_{8} - 4x_{6}^2 - 5x_{6}x_{7} \\ & + 14x_{6}x_{8}   - 2x_{7}^2
    + 15x_{7}x_{8} + 30x_{8}^2; \\
Q_{15} & = 6x_{3}x_{8} + 5x_{4}x_{6} + x_{4}x_{7} + x_{4}x_{8} - 5x_{5}^2 + 5x_{5}x_{6} + 
    2x_{5}x_{7} + 20x_{5}x_{8} - 2x_{6}^2 - 10x_{6}x_{7} \\ & - 23x_{6}x_{8}   - 
    4x_{7}^2 - 3x_{7}x_{8} - 21x_{8}^2;\\
c & = 138x_{1}^2x_{3} - 138x_{1}x_{2}^2 + 414x_{4}^3 - 1794x_{4}^2x_{5} + 
    2346x_{4}x_{5}^2   + 636x_{4}x_{7}^2 - 7278x_{4}x_{7}x_{8} \\ &  - 5148x_{4}x_{8}^2 
    - 2346x_{5}^3   + 2898x_{5}^2x_{6} + 1978x_{5}^2x_{7} + 2291x_{5}^2x_{8} + 
    3036x_{5}x_{6}^2 \\ & + 3643x_{5}x_{6}x_{7} - 16010x_{5}x_{6}x_{8}  + 
    1548x_{5}x_{7}^2 - 12320x_{5}x_{7}x_{8} + 31987x_{5}x_{8}^2 - 5152x_{6}^3 \\ &  - 
    9613x_{6}^2x_{7} - 3888x_{6}^2x_{8} - 1297x_{6}x_{7}^2  - 
    20654x_{6}x_{7}x_{8} - 3220x_{6}x_{8}^2 + 1139x_{7}^3 \\ & - 14602x_{7}^2x_{8} + 
    46267x_{7}x_{8}^2 - 15816x_{8}^3.
    \end{align*}

The cusps of $X_{H}\left( 53 \right)$ are
\begin{align*}
    c_{1} & = \left(1 : 0 : 0 : 0 : 0 : 0 : 0 : 0 \right); \\
    c_{2}  & = \left(1 : 2 : 1 : 1 : 2 : 1 : -1 : 1\right); \\
    c_{3} & = \left(  6\sqrt{53} + 46 : 4\sqrt{53} + 34 : 3\sqrt{53} + 25 : 3\sqrt{53} + 23 :  \sqrt{53}+ 13 :  2 :  \sqrt{53} + 5 :  2     \right); \\  
    c_{4}  & = \left(  -6\sqrt{53} + 46 : -4\sqrt{53} + 34 : -3\sqrt{53} + 25 : -3\sqrt{53} + 23 :  -\sqrt{53}+ 13 :  2 :  -\sqrt{53} + 5 :  2     \right).     \end{align*}
The cuspidal subgroup $C_{H}\left( 53 \right) = \langle \left[ c_{2} - c_{1} \right] ,\left[ c_{3} - c_{1} \right],\left[ c_{4} - c_{1} \right] \rangle$ is isomorphic to $ \left( \mathbb{Z} / 7 \mathbb{Z} \right) \times \left( \mathbb{Z} /91 \mathbb{Z} \right) $, and it fact, $ \left[ c_{3} - c_{1} \right] $ and $ \left[ c_{4} - c_{1} \right]$ are sufficient to generate the entire group. Taking Galois invariants, the resulting rational cuspidal subgroup is 
\begin{center}
$C_{H}\left( 53 \right)  \left( \mathbb{Q} \right) = \langle \left[ c_{3} + c_{4} - 2c_{1} \right] \rangle \cong  \left( \mathbb{Z} /91 \mathbb{Z} \right). $ 
\end{center}

Reducing modulo multiple primes, we find that $637$ is an upper bound for $\vert J_{H}\left( 53 \right) \left( \Q \left(  \sqrt{53} \right) \right)_{\text{tors}} $ and hence 
\begin{center}
$C_{H}\left( 53 \right) = J_{H}\left( 53 \right) \left( \Q \left(  \sqrt{53} \right) \right)_{\text{tors}} \cong \Z /7 \Z \times \Z / 91 \Z .$
\end{center}

\subsubsection*{p = 61}
A canonical model of $X_{H}\left( 61\right)$ is given by the zero locus of the following fifteen quadrics and one cubic.
\allowdisplaybreaks
\begin{align*}
 Q_{1} & = 4x_{1}x_{3} - 4x_{2}^2 - 404x_{4}x_{6} + 427x_{4}x_{7} - 329x_{4}x_{8} + 
        408x_{5}^2 - 459x_{5}x_{6} + 117x_{5}x_{7}  \\ & + 219x_{5}x_{8} + 240x_{6}^2 
        - 237x_{6}x_{7} - 539x_{6}x_{8} + 60x_{7}^2 + 469x_{7}x_{8} - 
        475x_{8}^2; \\
Q_{2} & = 4x_{1}x_{4} - 4x_{2}x_{3} + 16x_{4}x_{6} - 9x_{4}x_{7} + 23x_{4}x_{8} - 
        4x_{5}^2 + 9x_{5}x_{6} - 15x_{5}x_{7} - 5x_{5}x_{8} \\ & - 28x_{6}^2  + 
        39x_{6}x_{7} - 15x_{6}x_{8} - 8x_{7}^2 + 5x_{7}x_{8} + x_{8}^2;  \\
 Q_{3}  & = 4x_{1}x_{5} - 4x_{3}^2 - 316x_{4}x_{6} + 351x_{4}x_{7} - 249x_{4}x_{8} + 
        324x_{5}^2 - 347x_{5}x_{6} + 73x_{5}x_{7} \\ & + 199x_{5}x_{8}  + 168x_{6}^2  -
        169x_{6}x_{7} - 455x_{6}x_{8} + 44x_{7}^2 + 393x_{7}x_{8} - 379x_{8}^2;  \\
 Q_{4} & = 2x_{1}x_{6} - 2x_{3}x_{4} + 16x_{4}x_{6} - 11x_{4}x_{7} + 17x_{4}x_{8} - 
        8x_{5}^2 + 11x_{5}x_{6} - 5x_{5}x_{7} - 9x_{5}x_{8} \\ & - 14x_{6}^2   + 
        13x_{6}x_{7} + 13x_{6}x_{8} - 2x_{7}^2 - 9x_{7}x_{8} + 15x_{8}^2; \\
Q_{5} &=  4x_{1}x_{7} - 4x_{4}^2 - 216x_{4}x_{6} + 237x_{4}x_{7} - 175x_{4}x_{8} + 
        228x_{5}^2 - 241x_{5}x_{6} + 59x_{5}x_{7} \\ & + 129x_{5}x_{8}  + 128x_{6}^2 -
        139x_{6}x_{7} - 293x_{6}x_{8} + 36x_{7}^2 + 267x_{7}x_{8} - 253x_{8}^2; \\
  Q_{6} & = 2x_{1}x_{8} - 2x_{4}x_{5} + 34x_{4}x_{6} - 39x_{4}x_{7} + 25x_{4}x_{8} - 
        36x_{5}^2 + 39x_{5}x_{6} - 7x_{5}x_{7} - 23x_{5}x_{8} \\ & - 16x_{6}^2  + 
        15x_{6}x_{7} + 53x_{6}x_{8} - 4x_{7}^2 - 45x_{7}x_{8} + 45x_{8}^2; \\
 Q_{7} &= 4x_{2}x_{4} - 4x_{3}^2 - 232x_{4}x_{6} + 249x_{4}x_{7} - 183x_{4}x_{8} + 
        232x_{5}^2 - 237x_{5}x_{6} + 51x_{5}x_{7} \\ & + 149x_{5}x_{8} + 128x_{6}^2 -
        135x_{6}x_{7} - 309x_{6}x_{8} + 36x_{7}^2 + 271x_{7}x_{8} - 261x_{8}^2; \\
  Q_{8} & = 4x_{2}x_{5} - 4x_{3}x_{4} + 180x_{4}x_{6} - 189x_{4}x_{7} + 147x_{4}x_{8}- 
        180x_{5}^2 + 197x_{5}x_{6} - 51x_{5}x_{7} \\ & - 105x_{5}x_{8}  - 104x_{6}^2 +
        103x_{6}x_{7} + 245x_{6}x_{8} - 24x_{7}^2 - 211x_{7}x_{8} + 213x_{8}^2; \\
 Q_{9} &= 2x_{2}x_{6} - 2x_{4}^2 - 44x_{4}x_{6} + 49x_{4}x_{7} - 37x_{4}x_{8} + 
        44x_{5}^2 - 41x_{5}x_{6} + 11x_{5}x_{7} + 33x_{5}x_{8} \\ & + 26x_{6}^2  - 
        33x_{6}x_{7} - 55x_{6}x_{8} + 8x_{7}^2 + 53x_{7}x_{8} - 47x_{8}^2; \\
Q_{10} & = x_{2}x_{7} - x_{4}x_{5} + 16x_{4}x_{6} - 17x_{4}x_{7} + 13x_{4}x_{8} - 
        16x_{5}^2 + 20x_{5}x_{6} - 5x_{5}x_{7} - 7x_{5}x_{8} \\ & - 9x_{6}^2  + 
        8x_{6}x_{7}   + 22x_{6}x_{8} - 3x_{7}^2 - 18x_{7}x_{8} + 20x_{8}^2; \\
 Q_{11} & = 4x_{2}x_{8} - 16x_{4}x_{6} + 15x_{4}x_{7} - 13x_{4}x_{8} + 12x_{5}^2 - 
        19x_{5}x_{6} + 5x_{5}x_{7} + 3x_{5}x_{8} + 8x_{6}^2 \\ & - 5x_{6}x_{7}   - 
        27x_{6}x_{8} + 17x_{7}x_{8} - 23x_{8}^2; \\
 Q_{12} & = 2x_{3}x_{5} - 2x_{4}^2 - 90x_{4}x_{6} + 97x_{4}x_{7} - 75x_{4}x_{8} + 
        90x_{5}^2 - 97x_{5}x_{6} + 25x_{5}x_{7} + 53x_{5}x_{8} \\ & + 54x_{6}^2   - 
        57x_{6}x_{7} - 119x_{6}x_{8} + 14x_{7}^2 + 107x_{7}x_{8} - 105x_{8}^2 ;\\
   Q_{13} &= x_{3}x_{6} - x_{4}x_{5} - 22x_{4}x_{6} + 23x_{4}x_{7} - 18x_{4}x_{8} + 
        22x_{5}^2 - 23x_{5}x_{6} + 7x_{5}x_{7} + 14x_{5}x_{8} \\ & + 15x_{6}^2   - 
        17x_{6}x_{7} - 26x_{6}x_{8} + 4x_{7}^2 + 24x_{7}x_{8} - 23x_{8}^2; \\
 Q_{14} & = 4x_{3}x_{7} + 32x_{4}x_{6} - 39x_{4}x_{7} + 25x_{4}x_{8} - 36x_{5}^2 + 
        39x_{5}x_{6} - 5x_{5}x_{7} - 19x_{5}x_{8} - 8x_{6}^2 \\ & + x_{6}x_{7}  + 
        67x_{6}x_{8}  - 57x_{7}x_{8} + 51x_{8}^2; \\
 Q_{15} & = x_{3}x_{8} + 40x_{4}x_{6} - 43x_{4}x_{7} + 32x_{4}x_{8} - 40x_{5}^2 + 
        42x_{5}x_{6} - 10x_{5}x_{7} - 25x_{5}x_{8} - 23x_{6}^2 \\ & + 24x_{6}x_{7}  + 
        53x_{6}x_{8} - 6x_{7}^2 - 47x_{7}x_{8} + 45x_{8}^2; \\
 c & = 16256x_{1}^2x_{3} - 16256x_{1}x_{2}^2 + 16256x_{3}x_{4}^2 - 130048x_{4}^3 + 
        113792x_{4}^2x_{5} - 48768x_{4}x_{5}^2 \\ &  + 455620x_{4}x_{7}^2  + 
        934015x_{4}x_{7}x_{8} + 378671x_{4}x_{8}^2 + 81280x_{5}^3 + 
        130048x_{5}^2x_{6} \\ & + 567328x_{5}^2x_{7} + 1810692x_{5}^2x_{8}   - 
        1412640x_{5}x_{6}^2 - 710152x_{5}x_{6}x_{7} - 3915151x_{5}x_{6}x_{8} \\ &  + 
        499912x_{5}x_{7}^2 + 353577x_{5}x_{7}x_{8}  - 1485701x_{5}x_{8}^2 - 
        1474880x_{6}^3 + 3770532x_{6}^2x_{7} \\ & - 5789336x_{6}^2x_{8} - 
        2251600x_{6}x_{7}^2  + 5914379x_{6}x_{7}x_{8}  - 7757003x_{6}x_{8}^2 + 
        294568x_{7}^3 \\ &  - 467500x_{7}^2x_{8} + 3280949x_{7}x_{8}^2 - 
        3204251x_{8}^3.
 \end{align*}
The cusps of $X_{H}\left( 61 \right)$ are
\begin{align*}
    c_{1} & = \left(1 : 0 : 0 : 0 : 0 : 0 : 0 : 0 \right); \\
    c_{2}  & = \left(-3 : -2  :  0  : -1  :  -1  : -2 : -1 : 1 \right); \\
    c_{3} & = \left( 6 : \sqrt{61} + 13 : 6 : 2 : 2:  \sqrt{61} + 7 : \sqrt{61} + 7 : 2   \right); \\  
    c_{4}  & = \left( 6 : - \sqrt{61} + 13 : 6 : 2 : 2 :  -\sqrt{61} + 7 :  -\sqrt{61} + 7 : 2   \right).     \end{align*}
The cuspidal subgroup $C_{H}\left( 61 \right) = \langle \left[ c_{2} - c_{1} \right] ,\left[ c_{3} - c_{1} \right],\left[ c_{4} - c_{1} \right] \rangle$ is isomorphic to $ \left( \mathbb{Z} / 11 \mathbb{Z} \right) \times \left( \mathbb{Z} /55 \mathbb{Z} \right) $, and it fact, $ \left[ c_{3} - c_{1} \right] $ and $ \left[ c_{4} - c_{1} \right]$ are sufficient to generate the entire group. Taking Galois invariants, the resulting rational cuspidal subgroup is 
\begin{center}
$C_{H}\left( 61 \right)  \left( \mathbb{Q} \right) = \langle \left[ c_{3} + c_{4} - 2c_{1} \right] \rangle \cong  \left( \mathbb{Z} /55 \mathbb{Z} \right). $ 
\end{center}

Reducing modulo multiple primes, we find that $605$ is an upper bound for $\vert J_{H}\left( 61 \right) \left( \Q \left(  \sqrt{61} \right) \right)_{\text{tors}} $ and hence 
\begin{center}
$C_{H}\left( 61 \right) = J_{H}\left( 61 \right) \left( \Q \left(  \sqrt{61} \right) \right)_{\text{tors}} \cong \Z /11 \Z \times \Z / 55 \Z $.
\end{center}

\subsection{Curves of Genus $9$}
There is single prime congruent to $1$ modulo $4$ for which $X_{H}\left( p \right)$ has genus $9$, namely $73$. The curve $X_{H}\left(73\right)$ is not hyperelliptic, so its canonical model is the intersection of $21$ quadrics $\mathbb{P}^{8}$. 
\allowdisplaybreaks
\begin{align*}
Q_{1} & =  2x_{1}x_{3} - 2x_{2}^2 + 4x_{3}x_{4} - 3x_{4}^2 - 6x_{4}x_{5} + 29x_{5}^2 -  
        73x_{5}x_{6} - 219x_{5}x_{7}  + 219x_{5}x_{8} \\ & + 146x_{5}x_{9} + 73x_{6}^2  
        + 438x_{6}x_{7} - 438x_{6}x_{8} - 292x_{6}x_{9} + 584x_{7}^2 - 
        1314x_{7}x_{8} - 876x_{7}x_{9} \\ & + 657x_{8}^2   + 876x_{8}x_{9} + 
        292x_{9}^2; \\
 Q_{2} & = 2x_{1}x_{4} - 2x_{2}x_{3} - 2x_{3}x_{4} - 6x_{4}^2 + 7x_{4}x_{5} + 15x_{5}^2  - 73x_{5}x_{6} - 73x_{5}x_{7} + 219x_{5}x_{8} \\ & + 146x_{5}x_{9} + 
        73x_{6}^2 + 146x_{6}x_{7} - 438x_{6}x_{8} - 292x_{6}x_{9} + 73/2x_{7}^2 
        - 438x_{7}x_{8} - 292x_{7}x_{9} \\ &  + 657x_{8}^2 + 876x_{8}x_{9} + 
        292x_{9}^2; \\
  Q_{3} & =   4x_{1}x_{5} - 4x_{3}^2 - 8x_{3}x_{4} - 8x_{4}^2 + 16x_{4}x_{5} - 12x_{5}^2 + 
        146x_{5}x_{7} - 292x_{6}x_{7} - 511x_{7}^2 \\ & + 876x_{7}x_{8} + 
        584x_{7}x_{9}; \\ 
 Q_{4} & =  2x_{1}x_{6} - 2x_{3}x_{4} + 2x_{4}x_{5]} - 18x_{4}x_{8} - 6x_{5}^2 +  
        13x_{5}x_{6} + 59x_{5}x_{7} + 27x_{5}x_{8} + 32x_{5}x_{9} \\ & - 15x_{6}^2 - 
        177x_{6}x_{7} + 42x_{6}x_{8} - 34x_{6}x_{9} - 310x_{7}^2 + 411x_{7}x_{8} 
        + 172x_{7}x_{9} + 27x_{8}^2 \\ & + 222x_{8}x_{9} + 128x_{9}^2; \\
  Q_{5} & =  4x_{1}x_{7} - 4x_{4}^2 + 8x_{5}^2 - 20x_{5}x_{6} - 74x_{5}x_{7} + 
        24x_{5}x_{8} + 2x_{5}x_{9} + 18x_{6}^2 + 188x_{6}x_{7} \\ & - 72x_{6}x_{8} - 
        10x_{6}x_{9} + 280x_{7}^2 - 489x_{7}x_{8} - 244x_{7}x_{9} + 54x_{8}^2 - 
        42x_{8}x_{9} - 52x_{9}^2; \\
 Q_{6} & = 3x_{1}x_{8} - 3x_{4}x_{5} - 12x_{4}x_{8} + 6x_{5}^2 - 12x_{5}x_{6} - 
        28x_{5}x_{7} + 33x_{5}x_{8} + 9x_{5}x_{9} + 5x_{6}^2 \\ & + 60x_{6}x_{7} - 
        24x_{6}x_{8} + x_{6}x_{9} + 75x_{7}^2 - 171x_{7}x_{8} - 56x_{7}x_{9} + 
        36x_{8}^2 - 9x_{8}x_{9} - 26x_{9}^2; \\
 Q_{7} & = 4x_{1}x_{9} + 6x_{4}x_{8} - 4x_{5}^2 + 22x_{5}x_{6} - 10x_{5}x_{7} +  
        34x_{5}x_{9} - 14x_{6}^2 - 40x_{6}x_{7} + 24x_{6}x_{8} \\ & - 66x_{6}x_{9} - 
        18x_{7}^2 - 9x_{7}x_{8} - 194x_{7}x_{9} + 54x_{8}^2 + 330x_{8}x_{9} +  
        196x_{9}^2; \\ 
 Q_{8} & = 2x_{2}x_{4} - 2x_{3}^2 - 2x_{4}^2 - 10x_{4}x_{5} + 32x_{5}^2 - 73x_{5}x_{6} 
        - 219x_{5}x_{7} + 219x_{5}x_{8} + 146x_{5}x_{9} \\ & + 73x_{6}^2 + 
        438x_{6}x_{7} - 438x_{6}x_{8} - 292x_{6}x_{9} + 1241/2x_{7}^2 - 
        1314x_{7}x_{8} - 876x_{7}x_{9} \\ & + 657x_{8}^2 + 876x_{8}x_{9} + 
        292x_{9}^2; \\ 
  Q_{9} &= x_{2}x_{5} - x_{3}x_{4} - 2x_{4}^2 - x_{4}x_{5} + 11x_{5}^2 - 73/2x_{5}x_{6} - 
        73x_{5}x_{7} + 219/2x_{5}x_{8}+ 73x_{5}x_{9} \\ & + 73/2x_{6}^2 + 
        146x_{6}x_{7} - 219x_{6}x_{8} - 146x_{6}x_{9} + 146x_{7}^2 - 
        438x_{7}x_{8} - 292x_{7}x_{9} \\ & + 657/2x_{8}^2 + 438x_{8}x_{9} + 
        146x_{9}^2; \\
Q_{10} &= x_{2}x_{6} - x_{4}^2 - 9x_{4}x_{8} + 2x_{5}^2 - 6x_{5}x_{6} - x_{5}x_{7} + 
        15x_{5}x_{8} + 4x_{5}x_{9} + 2x_{6}^2 + 1/2x_{6}x_{7} \\ & - 6x_{6}x_{8} + 
        4x_{6}x_{9} - 8x_{7}^2 - 6x_{7}x_{8} + 30x_{7}x_{9} - 18x_{8}x_{9} - 
        16x_{9}^2; \\
 Q_{11} &=  4x_{2}x_{7} - 4x_{4}x_{5} + 8x_{5}^2 - 20x_{5}x_{6} - 50x_{5}x_{7} + 
        24x_{5}x_{8} + 2x_{5}x_{9} + 18x_{6}^2  + 132x_{6}x_{7}  \\ &  - 72x_{6}x_{8} - 
        10x_{6}x_{9} + 184x_{7}^2 - 345x_{7}x_{8} - 132x_{7}x_{9} + 54x_{8}^2 - 
        42x_{8}x_{9} - 52x_{9}^2; \\ 
 Q_{12} &= 3x_{2}x_{8} - 12x_{4}x_{8} - 3x_{5}^2 + 12x_{5}x_{6} + 34x_{5}x_{7} - 
        42x_{5}x_{8} - 39x_{5}x_{9} - 19x_{6}^2 - 64x_{6}x_{7} \\ & + 120x_{6}x_{8} +
        97x_{6}x_{9} - 73x_{7}^2 + 201x_{7}x_{8} + 192x_{7}x_{9} - 189x_{8}^2 - 
        297x_{8}x_{9} - 122x_{9}^2; \\
  Q_{13} &=  2x_{2}x_{9} + 3x_{4}x_{8} - 2x_{5}x_{6} + 3x_{5}x_{8} + 6x_{5}x_{9} + 
        4x_{6}^2 - 2x_{6}x_{7} - 18x_{6}x_{8} - 18x_{6}x_{9} \\ & - 6x_{7}^2 - 
        3x_{7}x_{8} - 11x_{7}x_{9} + 18x_{8}^2 + 42x_{8}x_{9} + 24x_{9}^2 ;\\
  Q_{14} &= 2x_{3}x_{5} - 2x_{4}^2 - 4x_{4}x_{5} + 8x_{5}^2 - 73x_{5}x_{7} + 
        146x_{6}x_{7} + 292x_{7}^2 - 438x_{7}x_{8} - 292x_{7}x_{9}; \\
  Q_{15} &=  2x_{3}x_{6} - 2x_{4}x_{5} + 12x_{4}x_{8} + 4x_{5}^2 - 10x_{5}x_{6} - 
        41x_{5}x_{7} - 6x_{5}x_{8} - 17x_{5}x_{9} + 15x_{6}^2 \\ & + 121x_{6}x_{7} - 
        54x_{6}x_{8} + 13x_{6}x_{9} + 185x_{7}^2 - 591/2x_{7}x_{8} - 
        118x_{7}x_{9} + 27x_{8}^2 \\ & - 111x_{8}x_{9} - 78x_{9}^2; \\
  Q_{16} &= 2x_{3}x_{7} - 2x_{5}^2 + 8x_{5}x_{6} + 7x_{5}x_{7} - 18x_{5}x_{8} - 
        9x_{5}x_{9} - 7x_{6}^2 - 14x_{6}x_{7}  + 36x_{6}x_{8} \\ & + 17x_{6}x_{9} -
        14x_{7}^2 + 69/2x_{7}x_{8} + 10x_{7}x_{9} - 45x_{8}^2 - 39x_{8}x_{9} - 
        6x_{9}^2; \\
   Q_{17} &=  6x_{3}x_{8} + 18x_{4}x_{8} - 6x_{5}x_{6} - 22x_{5}x_{7} - 30x_{5}x_{8} - 
        28x_{5}x_{9} + 16x_{6}^2 + 92x_{6}x_{7} \\ & - 36x_{6}x_{8} + 12x_{6}x_{9} + 
        140x_{7}^2 - 183x_{7}x_{8} - 68x_{7}x_{9} - 36x_{8}^2 - 156x_{8}x_{9} - 
        80x_{9}^2; \\
  Q_{18} &= 4x_{3}x_{9} - 6x_{4}x_{8} + 8x_{5}x_{7} + 6x_{5}x_{8} - 2x_{5}x_{9} -  
        6x_{6}^2 - 16x_{6}x_{7} + 18x_{6}x_{8} \\ & + 30x_{6}x_{9} - 50x_{7}^2 + 
        39x_{7}x_{8} + 66x_{7}x_{9} - 54x_{8}x_{9} - 36x_{9}^2 ;\\
  Q_{19} &= 2x_{4}x_{6} - 6x_{4}x_{8} - 2x_{5}^2 + 8x_{5}x_{6} + 15x_{5}x_{7} - 
        6x_{5}x_{8} - 3x_{5}x_{9} - 11x_{6}^2 - 50x_{6}x_{7} \\ &  + 42x_{6}x_{8} + 
        15x_{6}x_{9} - 53x_{7}^2 + 195/2x_{7}x_{8} + 42x_{7}x_{9} - 27x_{8}^2 + 
        3x_{8}x_{9} + 14x_{9}^2; \\
   Q_{20} &=  x_{4}x_{7} - x_{5}x_{6} + x_{5}x_{7} - 3x_{5}x_{8} - 4x_{5}x_{9} + x_{6}^2 + 
        2x_{6}x_{7} + 6x_{6}x_{9} + 2x_{7}^2 + 3x_{7}x_{8} \\ & + 16x_{7}x_{9} -
        9x_{8}^2 - 30x_{8}x_{9} - 16x_{9}^2; \\ 
 Q_{21} & = 2x_{4}x_{9} + x_{5}x_{7} - 3x_{5}x_{9} - x_{6}^2 - 4x_{6}x_{7} + 3x_{6}x_{8} + 5x_{6}x_{9} + x_{7}^2 - 3/2x_{7}x_{8} + 10x_{7}x_{9} \\ &  - 9x_{8}x_{9} - 
        6x_{9}^2.
\end{align*}
The cusps of $X_{H}\left( 73 \right)$ are
\begin{align*}
    c_{1} & = \left(1 : 0 : 0 : 0 : 0 : 0 : 0 : 0 : 0  \right); \\
    c_{2}  & = \left(3 : 0  :  0  : 0  :  0  : -3 : 0 :  -1 : 0 \right); \\
    c_{3} & = \left( 1 : 1 : 1 : 0: 0:  :\frac{1}{73} \left(  -11 \sqrt{73} -73  \right): -\frac{2}{73} \sqrt{73} : - \frac{4}{219} \sqrt{73} :   \frac{1}{146} \left( -11 \sqrt{73} - 73 \right)  \right); \\  
    c_{4}  & =  \left( 1 : 1 : 1 : 0: 0:  :\frac{1}{73} \left(  11 \sqrt{73} -73  \right): \frac{2}{73} \sqrt{73} : \frac{4}{219} \sqrt{73} :   \frac{1}{146} \left( 11 \sqrt{73} - 73 \right)  \right).
    \end{align*}

The cuspidal subgroup $C_{H}\left( 73 \right) = \langle \left[ c_{2} - c_{1} \right] ,\left[ c_{3} - c_{1} \right],\left[ c_{4} - c_{1} \right] \rangle$ is isomorphic to $ \left( \mathbb{Z} / 22 \mathbb{Z} \right) \times \left( \mathbb{Z} /66 \mathbb{Z} \right) $, and it fact, $ \left[ c_{3} - c_{1} \right] $ and $ \left[ c_{4} - c_{1} \right]$ are sufficient to generate the entire group. Taking Galois invariants, the resulting rational cuspidal subgroup is 
\begin{center}
$C_{H}\left( 73 \right)  \left( \mathbb{Q} \right) = \langle \left[ c_{3} + c_{4} - 2c_{1} \right] \rangle \cong  \left( \mathbb{Z} /66 \mathbb{Z} \right).$ 
\end{center}
The an upper bound for the size of $J_{H}\left( 73 \right) \left( \Q \left( \sqrt{73} \right) \right)_{\text{tors}}$ is $1452$, and hence 
\begin{center}
    $J_{H}\left( 73 \right) \left( \Q \left( \sqrt{73} \right) \right)_{\text{tors}} = C_{H}\left( 73 \right) \cong \Z /22 \Z \times \Z / 66 \Z. $
\end{center}

\bibliographystyle{abbrv}
\bibliography{ref}
\end{document}